\numberwithin{equation}{section} 
\newtheorem{theorem}{Theorem}[section]
\newtheorem{remark}[theorem]{Remark}
\newtheorem{lemma}[theorem]{Lemma}
\newtheorem{proposition}[theorem]{Proposition}
\newtheorem{corollary}[theorem]{Corollary}
\newtheorem*{question*}{Question}
\newtheorem*{problem*}{Problem}
\newtheorem*{conjecture}{Conjecture}
\newtheorem{definition}[theorem]{Definition}
\definecolor{darkgreen}{rgb}{0,0.4,0}
\newcommand{\R}{\mathbb{R}}
\newcommand{\E}{\mathbb{E}}
\title[Existence for the supercooled Stefan problem]{Existence for the supercooled Stefan problem in general dimensions}
\author{Sunhi Choi, Inwon C. Kim, \& Young-Heon KIm}
\date{}
\address{Department of Mathematics\\ University of Arizona\\ Tucson, AZ, USA}
\email{schoi@math.arizona.edu}
\address{Department of Mathematics\\ University of California at Los Angeles\\ Los Angeles, CA, USA}
\email{ikim@math.ucla.edu}
\address{
Department of Mathematics\\ University of British Columbia\\ Vancouver, V6T 1Z2 Canada
}
\email{yhkim@math.ubc.ca}
\thanks{IK is partially supported by NSF (DMS 2153254).  YHK is partially supported by  the 
Natural Sciences and Engineering Research Council of Canada (NSERC),  with Discovery Grant RGPIN-2019-03926, as well as  Exploration Grant  (NFRFE-2019-00944)  from the New Frontiers in Research Fund (NFRF). 
 YHK is also a member of the Kantorovich Initiative (KI) that is supported by PIMS Research Network (PRN) program. We thank PIMS for their generous support.
Part of this work is completed during YHK's visit at Korea Advanced Institute of Science and Technology (KAIST), and he thanks their hospitality and excellent environment.
\copyright 2024 by the author. All rights reserved. 
}
\begin{document}

\maketitle

\begin{abstract}
  We prove the global-time existence of weak solutions to the supercooled Stefan problem. Our result holds in general space dimensions and with a general class of initial data. In addition, our solution is maximal in the sense of a certain stochastic order, among all comparable weak solutions starting from the same initial data.

Our approach is based on a free target optimization problem for Brownian stopping times,  where the main idea is to introduce a superharmonic cost function in the optimization problem. We will show that our choice of the cost function causes the target measure to accumulate  near the prescribed domain boundary as much as possible. A central ingredient in our proof lies in the usage of dual problem: we prove the dual attainment and use the dual optimal solution  to characterize the primal optimal solution. It follows in turn that the underlying particle dynamics yield a solution to the supercooled Stefan problem.

 \end{abstract}

\tableofcontents
\section{Introduction}\label{sec:intro}

The main objective of this paper is to prove  global existence results for the supercooled Stefan problem $(St)$ in general dimensions, for a class of general initial data.

\medskip
 
 The problem $(St)$  describes freezing of supercooled water into ice, and can be  written as 
 \begin{equation}\label{eq:stefan}
(\eta - \chi_{\{\eta>0\}})_t - \frac{1}{2}\Delta \eta = 0 \hbox{ in } \R^d \times (0,\infty).
\end{equation} 
Here $-\eta$ denotes the temperature of the supercooled water \cite{Ste90, Ste91}. The  term $\chi_{\{\eta>0\}}$ represents the unit amount of change in latent heat when water turns into ice. Heuristically, the problem corresponds to an interacting particle system where active particles generate Brownian motion in the supercooled water region, stop at the boundary of the region and turn  into the ice. The dynamics of the {\it free boundary }$\partial\{\eta>0\}$ is the central interest in this problem.

\medskip

The problem is famously ill-posed with general initial data: most known examples and results are in one space dimension.  The available existence results in multiple space dimensions are limited to either self-similar ones \cite{HP19} or the ones with specially chosen initial data by an elliptic variational inequality \cite{DF84, Kim-Kim21}. 
 This is in contrast to the {\it stable Stefan problem} that usually describes ice melting into water, which has been extensively studied in the literature; see e.g. \cite{friedman68}, \cite{caffarelli77}, \cite{CK10}, \cite{FRS24} and the book \cite{Meirmanov-book}.

\medskip
 
 In one space dimension, 
it is well-known that solutions of \eqref{eq:stefan} develop finite-time discontinuities of their free boundary \cite{She70, HV96, CS96, CK09, CK12},  when the initial data $\eta(x, 0)$ is larger than $1$ near the initial free boundary $\partial\{\eta(\cdot,0)>0\}$.
Intuitively speaking, such discontinuity is due to the excess of active particles present near the water-ice interface, which generates infinite rate of freezing. The jump discontinuity of the free boundaries is the main source of non-existence and non-uniqueness of weak solutions in one space dimension (see for instance the discussion in \cite{CK09, DNS19}). On the other hand, still in one space dimension, for initial data that is small and smooth in its support, it is known that the free boundary does not jump: indeed there exists a unique smooth solution, see for instance \cite{FP81, FPH83, CK12}. 
See also the discussion of this dichotomy with the critical density of size $1$ near the initial free boundary, in terms of one-dimensional particle systems in  \cite{DT19}.  
\medskip

These results motivate our investigation on the global existence of weak solutions for  $(St)$  in general space dimensions. Roughly speaking, our result shows that such existence result holds if there is enough room for the initial data to diffuse within its initial support, to form a unit-density ice at the end of the freezing process.
More precisely, we present a result that addresses all initial data that are compactly supported and less than $1$, and at the same time gives a general condition that allows initial data that are larger than $1$ away from its support boundary. 
\medskip

To our best knowledge, ours is the first global-time existence result for solutions $(St)$ that starts from a given initial data, in general space dimensions: in fact the generality of initial data makes our result new even in the case of one space dimension.  Our approach builds on the variational construction for solutions of $(St)$ based on particle dynamics initiated in \cite{Kim-Kim21}, where a partial existence result has been established. 

\medskip

Before stating the main theorem, we first explain our condition on the initial data $\eta_0$. The condition is based on the {\it subharmonic order} $\leq_{SH}$ between measures, which is a natural stochastic order for the underlying interacting particle system for our problem. Roughly speaking the ordering states that there is a Brownian motion with a stopping time that starts from one distribution and ends with another: see Section~\ref{sec:prelim},  in particular \eqref{eligibility}. 
\begin{definition}\label{def:C0}
A bounded, compactly supported function $\eta_0$ satisfies the condition $(C_0)$ if the following holds:
 \begin{itemize}
 \item[$(C_0)$:]  There exists a constant $0< \delta <1$, such that 
\begin{align*} \eta_0 \leq_{SH} \nu
\hbox{ with respect to $U:=\{\eta_0>0\}$, \quad for some  $\nu$ with  }
 \nu \le \delta \chi_U .
\end{align*}
\end{itemize}

\end{definition}

  The condition $(C_0)$ is natural in terms of the  particle dynamics associated with the freezing process in $(St)$, where the (negative) heat particles follow the Brownian motion in the supercooled water region, hit the ice front, and become frozen. In this context $\nu$ relates to the final distribution of frozen ice that are generated by those Brownian particles.  The inequality $\nu \le \delta \chi_{\{\eta_0 >0\}}$ corresponds to the upper bound for the capacity of the latent heat in ice,  is inspired by the critical threshold $1$ for the constant density observed in the literature. 
 It is worth pointing out that any initial data which is equal or larger than $1$ near the boundary of its positive set cannot satisfy  $(C_0)$, since then there is no room for diffusing such a mass  by the Brownian motion to another measure with less than unit density supported in the same positive set. On the other hand, it is important to point out that
\begin{align*}
 \hbox{if $\eta_0 \le \delta$ for some constant $\delta <1$, then it automatically satisfies $(C_0)$.}
\end{align*}
As we will see later, the condition $(C_0)$ presents a plethora of initial data that are larger than $1$ in the interior of its positive set.

\medskip

Now we are ready to state our main existence theorem.

\begin{theorem}\label{thm:A}
Let $\eta_0$ satisfy $(C_0)$, with its positive set $\{\eta_0>0\}$ open, bounded, and with locally Lipschitz boundary. In addition suppose that $|\{\eta_0=1\}|=0$.

Then there exists a weak solution $\eta \in L^1(\R^d\times [0,\infty))$ to  \eqref{eq:stefan} with initial data
$\eta_0$ that satisfies the following additional properties:
\begin{itemize}
\item $\{\eta>0\}$ is open and its {\it initial domain} $\limsup_{t\to 0^+} \{\eta(\cdot,t)>0\}$ equals $\{\eta_0>0\}$, possibly minus a measure zero set (see Section~\ref{observation00}).
\item  $\eta$ is maximal in the sense of Definition~\ref{def:maximal} below.
\end{itemize}
\end{theorem}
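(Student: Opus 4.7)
The plan is to construct $\eta$ by solving an optimal stopping problem for Brownian motion $B_t$ with initial distribution $\eta_0$. I consider admissible stopping times $\tau$ whose joint law $(B_0, B_\tau)$ realizes the subharmonic order $\eta_0 \leq_{SH} \mu$ for some target measure $\mu$ supported in $\overline{\{\eta_0 > 0\}}$, and I maximize an expected cost $\mathbb{E}[c(B_\tau)]$ for a carefully chosen strictly superharmonic $c$. The superharmonicity forces any optimizer to stop particles as early as possible—i.e., as close to $\partial\{\eta_0 > 0\}$ as feasible—because prolonging the Brownian motion strictly decreases the expected cost. Condition $(C_0)$ guarantees the feasible set is nonempty (via $\nu$), while the sub-unit density $\nu \leq \delta \chi_U$ provides the crucial slack needed to accommodate all the mass as frozen ice at density below $1$.

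Next, I establish existence of a primal optimizer by standard weak-$*$ compactness on the space of stopping measures (viewed as laws on path space), using tightness inherited from boundedness of the support and upper semicontinuity of the cost functional. The heart of the argument is the dual problem: Lagrangian relaxation of the measure-theoretic constraints gives a dual over pairs of test functions $(u, \phi)$, with $u$ superharmonic on a suitable domain and $\phi$ acting as a Lagrange multiplier for the mass/support constraint. I would prove dual attainment by a truncation-and-compactness scheme with standard potential-theoretic regularization to preserve the pointwise superharmonicity constraint under limits. Complementary slackness then forces the optimal target to be supported on the contact set $\{u = \phi\}$, giving a natural free-boundary-like characterization of the optimal stopping rule.

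From the optimal coupling I extract $\eta$ as follows: the Brownian particles, run up to the optimal stopping time, induce a time-evolving distribution of "active" mass (the density of unstopped particles at time $t$) which I define to be $\eta(\cdot, t)$; the stopped mass constitutes frozen ice of unit density at the locations where stopping has occurred. An application of Itô's formula to the running density, together with the identification of the stopped mass via the dual optimizer, yields the distributional identity \eqref{eq:stefan}. The dual characterization of the stopping rule should give enough regularity to ensure $\{\eta > 0\}$ is open, and the initial-domain identification with $\{\eta_0 > 0\}$ (modulo a null set) is where the hypothesis $|\{\eta_0 = 1\}| = 0$ enters: it prevents a positive-measure set on which freezing happens instantly at $t = 0^+$.

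Finally, maximality in the sense of Definition~\ref{def:maximal} should follow from an optimality-to-order comparison: any competing weak solution $\tilde\eta$ defines a feasible stopping rule whose cost is at most that of our optimizer, and the superharmonicity of $c$ converts this cost inequality into the required stochastic ordering of the frozen distributions. The principal obstacle I anticipate is the dual attainment together with sufficient regularity of the dual optimizer—enough to both characterize the free boundary and ensure that $\{\eta>0\}$ is genuinely open. A secondary delicate point is the behavior across the transition set $\{\eta_0 > 1\}$ versus $\{\eta_0 < 1\}$: the condition $|\{\eta_0 = 1\}| = 0$ must be used to rule out the measure-valued free-boundary jumps that obstruct existence in one space dimension, and translating this clean-transition hypothesis into a statement about the optimal stopping geometry is where most of the technical effort will concentrate.
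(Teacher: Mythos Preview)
Your optimization is set up with the wrong sign, and the geometric intuition that follows is inverted. If $c$ is superharmonic then $t\mapsto c(B_t)$ is a supermartingale, so $\mathbb{E}[c(B_\tau)]$ \emph{decreases} as $\tau$ grows; maximizing it therefore drives $\tau\to 0$, and ``stopping early'' means the target stays near the initial distribution $\eta_0$ in the interior of $U$, not near $\partial U$. The paper does the opposite: it \emph{minimizes} $\int u\,d\nu$ for strictly superharmonic $u$, which pushes particles to run as long as possible, and---crucially---it imposes the density constraint $\nu\le \chi_U$ on the admissible targets, which you omit. Without that constraint the minimizer would collapse to the exit-time distribution on $\partial U$; with it, the optimal target saturates as $\nu^*=\chi_{\{\phi^*<0\}}$ for a dual optimizer $\phi^*$. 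That $\nu^*$ is a \emph{characteristic function} is the decisive output of the variational step, since only then does the weighted Stefan equation from \cite{Kim-Kim21} reduce to $(St)$; your proposal never formulates this conclusion.

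The saturation proof is also not the routine complementary-slackness argument you sketch. Duality alone gives $\nu^*=1$ on $\{\phi^*<0\}$ and $\int\phi^*(df-d\nu^*)=0$, but showing $\nu^*=0$ on $\{\phi^*=0\}$ requires the Frank--Lieb ``no flat spots'' theorem (Lemma~\ref{lemm:no-flat}) to conclude $|\{\psi^*=-u\}|=0$ from the superharmonicity of $\psi^*$ and strict subharmonicity of $-u$. Separately, the strict inequality $\delta<1$ in $(C_0)$ is not just ``slack'': it is exactly what produces the uniform $L^1$ bound on the maximizing sequence $\phi_i$ needed for dual attainment (Step~2 of Theorem~\ref{th:dual-attain}). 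Finally, your maximality argument inherits the sign error: in the paper one assumes $\chi_{\Sigma(\eta)}\le_{SH}\chi_{\Sigma(\tilde\eta)}$, which makes $\tilde\nu:=\chi_{\Sigma(\tilde\eta)}$ admissible with cost $\int u\,d\tilde\nu\le\int u\,d\nu^*$ in the \emph{minimization}, hence $\tilde\nu=\nu^*$ by uniqueness (Corollary~\ref{cor:uniqueness-primal}), and then $\tilde\eta=\eta$ via the uniqueness of the optimal Skorokhod stopping time.
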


 The proof of Theorem~\ref{thm:A}  will be presented in Section~\ref{sec:main}.  We remark that our assumption on the Lipschitz geometry of $U=\{\eta_0>0\}$ can be generalized further, for instance, to any bounded simply  connected open set $U$ in two space dimension, and any open set satisfying exterior cone/ball condition in higher  dimensions: see Remark~\ref{assumption:U}.  
 
 \medskip
 
 Our notion of weak solution, given in  Definition~\ref{def:weak-solution}, is the standard version given by integration by parts. Our notion of maximality bears similarity to the `minimal jump' solution discussed in \cite{DNS19} for one space dimension. For a weak solution $\eta$ to \eqref{eq:stefan}, we define the {\it transition zone} $\Sigma(\eta)$ to be the set of points $x$ where the supercooled water turns into ice,
that is, 
$$\Sigma(\eta): = \{x \ | \ \eta(x,t)=0 \hbox{ for some } 0< t < \infty\} \cap \{\eta_0>0\}.$$ 

 In the definition below we consider the maximality of $\eta$ in terms of its transition zone, with respect to the subharmonic order $\leq_{SH}$.
 
\begin{definition}[Maximal solution]\label{def:maximal}
 A weak solution $\eta$ to \eqref{eq:stefan} is  {\em maximal} if 
 for any other weak solution $\tilde \eta$ with the same initial data $\eta_0$ and initial domain $\{\eta_0>0\}$,
 \begin{align*}
\chi_{\Sigma(\eta)} \leq_{SH}  \chi_{\Sigma(\tilde{\eta})}   \hbox{ implies } \eta =\tilde \eta.
\end{align*}
\end{definition}

\medskip

Let us now briefly discuss optimality of our result. The additional conditions $|\{\eta_0=1\}|=0$ in the above theorem, and the constant $\delta<1$ are necessary for our approach (see Remark~\ref{rmk:condition} and Remark~\ref{rmk:delta}), but  they seem  not essential to the existence result. Let us indeed propose what we expect to be a sharp condition for the global existence result:
 \begin{align*}
 (C): \quad \quad & \eta_0 \hbox{  is strictly  subharmonically ordered with  $\nu$, with respect to }\{ \eta_0>0\},\\
& \hbox{ for some  $\nu$ with  }  
 \nu \le  \chi_{\{\eta_0>0\}}.
\end{align*}
By strictly suharmonic order we mean that $\eta_0\leq_{SH} \mu$ with strictly positive stopping time (see Definition~\ref{strict}).  It is indeed a necessary condition as a consequence of the analysis in \cite{Kim-Kim21}, see Theorem~\ref{thm:necessary}. Our conjecture is that the condition is also sufficient:

\begin{conjecture}
There exists a global-time weak solution of $(St)$ with initial data $\eta_0$ and initial domain  $\{\eta_0>0\}$ if and only if  $\eta_0$ satisfies  condition $(C)$.
\end{conjecture}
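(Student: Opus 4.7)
The plan is to deduce the conjecture from Theorem~\ref{thm:A} by a scaling approximation. The necessity direction is Theorem~\ref{thm:necessary}, so I focus on sufficiency. Given $\eta_0$ satisfying $(C)$, with open bounded locally Lipschitz positive set $U:=\{\eta_0>0\}$ and strictly subharmonically dominating measure $\nu\le \chi_U$, set $\eta_0^{(n)}:=\delta_n\eta_0$ and $\nu^{(n)}:=\delta_n\nu$ for a sequence $\delta_n\uparrow 1$ chosen so that $|\{\eta_0=1/\delta_n\}|=0$ for each $n$; this is possible because at most countably many values of $c$ can give $|\{\eta_0=c\}|>0$. Since $\leq_{SH}$ is linear in the measure side and the realizing stopping time is unchanged under positive scaling, $\eta_0^{(n)}\leq_{SH}\nu^{(n)}$ strictly with respect to $U$, and $\nu^{(n)}\le \delta_n \chi_U$ with $\delta_n<1$. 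Thus $\eta_0^{(n)}$ satisfies $(C_0)$ and all geometric hypotheses of Theorem~\ref{thm:A}, so for each $n$ one obtains a maximal weak solution $\eta^{(n)}\in L^1(\R^d\times[0,\infty))$ with initial data $\eta_0^{(n)}$ and initial domain $U$ (up to a null set).

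The next step is to pass $n\to\infty$. The conserved quantity $\eta^{(n)}-\chi_{\{\eta^{(n)}>0\}}$ has $L^1$ norm bounded uniformly by $\|\eta_0\|_{L^1}+|U|$, and the supports of $\eta^{(n)}$ are contained in a fixed compact set, since freezing can only shrink the water region. A space-time compactness argument, combined with the fact that $\eta^{(n)}$ solves the linear heat equation inside its positivity set, should yield a subsequence converging in $L^1_{\rm loc}$ to some limit $\eta$, together with weak-$*$ convergence of the indicators $\chi_{\{\eta^{(n)}>0\}}$ to some $\theta\in L^\infty$ satisfying $\chi_{\{\eta>0\}}\le \theta\le \chi_U$. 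Passing to the limit in the weak formulation of \eqref{eq:stefan} against smooth test functions produces the desired identity but with $\theta$ in place of $\chi_{\{\eta>0\}}$, so the remaining task is to show $\theta=\chi_{\{\eta>0\}}$ almost everywhere.

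This nonlinear identification is the main obstacle. I would tackle it via the variational and dual structure underlying Theorem~\ref{thm:A}. The maximality of each $\eta^{(n)}$ forces the transition zone $\Sigma(\eta^{(n)})$ to be as large as allowed by the constraint $\delta_n\chi_U$, producing a monotonicity in $n$ that rules out the spontaneous creation of ghost freezing in the limit. Simultaneously, the strict ordering in $(C)$ gives a uniform lower barrier: on any compact $K\Subset U$ the stopping times realizing $\eta_0\leq_{SH}\nu$ are bounded below away from zero, and this transfers through the scaling to prevent instantaneous freezing of interior points uniformly in $n$. Together these two controls should force $\theta=\chi_{\{\eta>0\}}$ a.e., and identify the initial domain of the limit as $U$ modulo a null set. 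The most delicate point will be the uniform non-degeneracy estimate showing that $\{\eta^{(n)}>0\}$ does not collapse as $\delta_n\to 1$, because the critical threshold $1$ is precisely where the dual problem used in the proof of Theorem~\ref{thm:A} loses its compactness margin; securing a barrier that persists at $\delta=1$ is, in my view, the heart of the conjecture.
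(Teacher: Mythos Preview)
The statement you are attempting to prove is explicitly labeled a \emph{Conjecture} in the paper and is left open; the paper provides no proof. The authors remark that Theorem~\ref{thm:A} ``doesn't fully answer our conjecture, it almost does,'' and Remark~\ref{rmk:delta} pinpoints exactly why their method stops short of $(C)$: the dual-attainment argument (Step~2 of Theorem~\ref{th:dual-attain}) requires the strict inequality $\delta<1$ to obtain the $L^1$ bound \eqref{eqn:phi-i-bound} on the maximizing sequence. So there is no ``paper's own proof'' to compare against.

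Your scaling approximation is a natural first idea, and the setup through the application of Theorem~\ref{thm:A} to each $\eta_0^{(n)}=\delta_n\eta_0$ is correct. However, the proposal is not a proof: the crucial step---identifying the weak-$*$ limit $\theta$ of $\chi_{\{\eta^{(n)}>0\}}$ with $\chi_{\{\eta>0\}}$---is left at the level of heuristics, and the two mechanisms you invoke are not substantiated. First, no monotonicity of the transition zones $\Sigma(\eta^{(n)})$ in $n$ is established anywhere: the optimal target $\nu^{(n),*}$ for $P(\delta_n\eta_0,1,u)$ is not obviously monotone in $\delta_n$, and the maximality of Definition~\ref{def:maximal} compares only solutions sharing the \emph{same} initial data, so it says nothing across the sequence. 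Second, the ``uniform lower barrier'' argument conflates the fixed stopping time witnessing $(C)$ with the optimal stopping times $\tau^{(n),*}$ that actually generate $\eta^{(n)}$; the former is merely admissible and does not control the latter uniformly as $\delta_n\to 1$. You yourself concede in the final sentence that securing estimates that persist at $\delta=1$ ``is, in my view, the heart of the conjecture''---which is to say that the conjecture remains open after your outline, in agreement with the paper's own position.
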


\medskip

While Theorem~\ref{thm:A}  doesn't fully answer our conjecture, it almost does. Moreover there is a broad class of initial data that meets the condition $(C_0)$. Remark~\ref{example-SO} discusses a simple example of $\eta_0$ satisfying $(C_0)$. Furthermore in Section~\ref{sec:obstacle-to-Stefan} we discuss concrete conditions for $\eta_0$ that fails the sharp condition $(C)$, and conditions for $\eta_0$ that implies  $(C_0)$. These conditions generate many nontrivial examples for the initial data our result holds, beyond those that are smaller than $1$: see Corollarys~\ref{cor:nonexistence} and \ref{cor:existence} and remarks therein.

\medskip

We expect that the evolution of the free boundary in Theorem~\ref{thm:A} will generate many intriguing irregularities that go much beyond topological changes of the positive set $\{\eta>0\}$.
Even with smooth and small initial data $\eta_0$, the evolution of the positive set $\{ \eta>0\}$ appears to be irregular for most choices of the initial domain $U:=\{\eta_0>0\}$, for space dimension larger than $1$. For instance, one may expect that the set $\{ \eta(x,t)>0\}$ continuously decreases from $U$ initially  to a compact subset of $U$ eventually. Interestingly, however, one can check that this is not the case if  $U$ has any non-smooth ($C^\infty $) boundary point: see Section~\ref{observation00}. 

\medskip

Let us briefly describe our approach in the paper. We consider a novel optimization problem \eqref{eqn:primal-problem}, where we find a free target measure supported inside a given domain $U$,  that minimizes the expected value of a given superharmonic function $u$. The admissible class of measures are those generated by the Brownian stopping times from a given initial distribution $\eta_0$,  bounded by the characteristic function $\chi_U$.  The main step in our approach is then to show that such an optimal measure $\nu^*$ is a characteristic function. Once we establish this, it follows from \cite{Kim-Kim21} that  the optimal measure $\nu^*$ corresponds to the final distribution of ice in $(St)$ after the freezing procedure is  complete. For our analysis it is important to observe that, superharmonicity of the cost $u$ makes the Brownian particles to move as much as possible, as the longer the stopping time, the smaller the corresponding expected value of $u$; this principle is manifested in  that the final optimal distribution $\nu^*$ is to be as close to the boundary of the initial domain as possible. See Section~\ref{sec:primal}, where we present the connection between our optimization problem and the Stefan problem $(St)$, which leads to the discussion of condition  $(C)$.

\medskip

The key step   in our method  
 is  to consider the dual problem \eqref{eqn:dual-problem-new}, for which we prove no-duality-gap (Theorem~\ref{th:dual}) and dual attainment (Theorem~\ref{th:dual-attain}). The dual formulation is crucial in showing the saturation property, Theorem~\ref{th:saturation}, of the optimal measure, where the saturated region is determined by the dual optimal solution in an explicit way.
  We then use the results of \cite{Kim-Kim21} to connect the optimal measure with a solution to the Stefan problem to prove our main theorems in Section~\ref{sec:main}.

\medskip

 There are important questions to be answered in future work. For example we suspect that  our class of weak solutions given in Theorem~\ref{th:Stefan-weak} feature continuous evolution of the free boundary (see Remark~\ref{continuity}),  though this remains to be confirmed.  Moreover, we  also conjecture that uniqueness holds for maximal solutions given in Definition~\ref{def:maximal},  for a given pair of initial data and initial domain.
  With our approach, such uniqueness would be equivalent to 
 the statement that  the optimization problem \eqref{eqn:primal-problem} yields the same optimal solution independent of the choice of the cost
  function $u$;  in the present paper we only have uniqueness of the solution for a given superharmonic cost $u$ (see Corollary~\ref{cor:uniqueness-primal}).

 \medskip

\noindent{\bf Acknowledgements.} 
We thank Mathav Murugan and Zhen-Qing Chen for helpful discussions on probabilistic aspects. 
\section{Preliminaries}\label{sec:prelim} 

\subsection{Assumptions}\label{sec:Assumptions and notation}
Throughout the paper we use the following assumptions. 
\begin{itemize}
\item $U$ is a bounded open set in $\R^d$, with locally Lipschitz boundary.
\item For each $r>0$, we denote  $U_r  := \{ x \in U \ | \ d(x, \partial U) > r\}$. 

\item $u: \bar{U} \to \R_{\ge 0}$ is a bounded, smooth function on $\bar{U}$ such that  $$\Delta u < 0 \hbox{ in } U.$$ 
 \item  $f:  \R^d \to \R_{>0}$ is a bounded, positive measurable function. 
 \item $\mu$ has a bounded density supported in $\bar{U}$ and
$$\mu[U] \le \int_{U} f dx.$$

 \item The expression $\tau \oplus \sigma$ is the concatenation of the two stopping times, namely, the Brownian particles first move until $\tau$, and then they further move for the additional time $\sigma$. 

\end{itemize}

The Lipschitz assumption on $\partial U$ is used to ensure that the stopping times associated with $U$ are well approximated with those associated with $U_r$: see Remark~\ref{assumption:U}. While we expect that it can be generalized to a broader class of domains, some very irregular domains may be problematic with our choice of approximation, e.g.,  those with boundaries with positive Lebesgue measure.

\subsection{Randomized stopping times}
 We now define a few standard notions for Brownian motion; here we use the same notation as in
   \cite{Kim-Kim21}.
We make the same assumptions as in \cite{Kim-Kim21} on the probability space. Let $(\Omega, \mathcal{F}, (\mathcal{F}_t)_{t\geq 0}, \mathbb{P})$ be a filtered probability space that incorporates the Brownian motion $W_t: \Omega \to \R^d$.

\medskip

 A random variable $\tau:\Omega\to \R^+$ is called a {\it stopping time} if $\tau^{-1}([0,t])\in \mathcal{F}_t$ for any $t\geq 0$. Stopping times can be regarded as a rule that prescribes when each particle of the process stops moving. An important example of stopping time, to be used frequently in this paper, is the {\it exit time} associated with a domain. Let $W_t$ denote a Brownian motion, and 
 for a given domain $D$ in $\R^d$, let $\tau^D$ denote the exit time for $D$, that is, 
\begin{align*} 
 \tau^D: = \inf \{ t  >0  \ | \ W_t \in D^c\}
\end{align*}
(Here, we use the convention that $\tau^D=0$ for those Brownian paths starting already from $D^c$.)

\medskip

 For our analysis we introduce a relaxed notion of the stopping time, called a {\it randomized stopping time}, which assigns a probability distribution to each particle instructing when to stop. 
 
 \begin{definition} 
Let $\mathcal{M}(\R^+)$ be the collection of probability measures on $\R^+$. A {\it randomized stopping time } $\tau$ is a map from  $\Omega$ to  $\mathcal{M}(\R^+)$ such that 
$$
\hbox{ For each } t\geq 0, \hbox{the map } w\in \Omega \to \tau(w)([0,t]) \hbox{ is } \mathcal{F}_t - \hbox{measurable. }
$$

\end{definition} 

For a randomized stopping time $\tau$ and $\varphi:\R^+\to \R$, we denote
$$
\mathbb{E}[\varphi(\tau)] =\mathbb{E}\left[\int_{\mathbb{R}^+} \varphi(t)\tau(dt)\right]. 
$$

We say that $\nu$ is the distribution of the stopped Brownian motion $W_t$ at the (randomized) stopping time $\tau$, that is $W_{\tau} \sim \nu$, if 
$$
\mathbb{E}[g(W_{\tau})] = \int g(z) d\nu(z)  \quad\hbox{ for any nonnegative $g$ that is measurable in $\R^d$.} 
$$

For more detailed  discussions on randomized stopping time, we refer to \cite{Kim-Kim21}.

\subsection{Superharmonic functions.}

In this section we will collect different notions of superharmonic functions and their relations. Let us first begin with the standard notion.

\begin{definition}\label{def:conventional-superharmonic}

A lower semi-continuous function $\phi: \mathcal{O} \to \R \cup\{-\infty\}$ is {\it superharmonic }if for every $x\in \mathcal{O}$ and for every ball $B$ in $\mathcal{O}$  centered at $x$, it satisfies
$$
\phi(x) \leq \frac{1}{|B|} \int_B \phi (y) dy.
$$
\end{definition}

\medskip

Now, we consider a maximum principle of superharmonic functions via stopping times, which will be crucial in the dual formulation. 
The following is an application of a well-known lemma (see. e.g. \cite[Lemma 2.3 ]{GKP21}):

\begin{lemma}\label{lem:maximum-principle-stopping}
 Let  $\phi$ be a  superharmonic function in $U$. For each $x \in \R^d$, let $W^x_t$ denote a Brownian motion at time $t$ started from $x$ at $t=0$.
  Then, for each $r >0$,  for a.e. $x \in U$,  and for each (randomized) stopping time $\tau$ that satisfies  $\tau \leq \tau^{U_r}$,
\begin{align} \label{eqn:superharmonic}
   \phi (x) \ge \E \left[ \phi(W^x_\tau )\right].
\end{align}
\end{lemma}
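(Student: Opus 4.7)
The plan is to reduce to the smooth case by mollification, apply the standard supermartingale / optional stopping argument, and then handle randomization and the limit.

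First I would set $\phi_\varepsilon := \phi * \rho_\varepsilon$ for a nonnegative radial mollifier $\rho_\varepsilon$ of radius $\varepsilon$; this is $C^\infty$ on $U_\varepsilon$. A brief Fubini computation shows that the ball-average superharmonic inequality for $\phi$ transfers to $\phi_\varepsilon$ on $U_\varepsilon$, so in particular $\Delta \phi_\varepsilon \le 0$ there. For any $\varepsilon < r$ we have $\bar U_r \subset U_\varepsilon$, so Itô's formula applied to $\phi_\varepsilon(W^x_{t\wedge \tau^{U_r}})$ (for $x\in U_\varepsilon$) yields a continuous supermartingale: the $dW_s$ integral is a martingale and the drift $\frac12 \Delta\phi_\varepsilon(W_s)$ is nonpositive along the path. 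Boundedness of $\phi_\varepsilon$ on the compact set $\bar U_r$ makes this supermartingale bounded.

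Next, for a deterministic stopping time $\sigma \le \tau^{U_r}$, optional stopping gives $\E[\phi_\varepsilon(W^x_\sigma)] \le \phi_\varepsilon(x)$. To promote the inequality to a randomized $\tau \le \tau^{U_r}$, I would use the standard randomization trick: enlarge $(\Omega,\mathcal F,\mathbb P,(\mathcal F_t))$ by an independent uniform random variable $\xi\sim \mathcal U[0,1]$, set $A_t(\omega) := \tau(\omega)([0,t])$ (which is $\mathcal F_t$-adapted by definition of a randomized stopping time), and define the ordinary stopping time $\tilde\tau := \inf\{t : A_t \ge \xi\}$ on the enlarged filtration. The constraint $\tau \le \tau^{U_r}$ is equivalent to $A_{\tau^{U_r}}=1$ a.s., which forces $\tilde\tau \le \tau^{U_r}$ regardless of $\xi$; also $W^x_{\tilde\tau}$ has the same law as $W^x_\tau$. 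The deterministic case then gives $\E[\phi_\varepsilon(W^x_\tau)] \le \phi_\varepsilon(x)$.

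Finally I would pass $\varepsilon \to 0$. Lower semicontinuity of $\phi$ together with the superharmonic mean value inequality gives $\phi_\varepsilon(x) \uparrow \phi(x)$ for a.e.\ $x\in U$ (at every Lebesgue point of $\phi$ where the shrinking ball averages recover $\phi(x)$). Monotone convergence on the left and pointwise convergence on the right yield \eqref{eqn:superharmonic} for a.e.\ $x \in U$, as claimed. The main technical delicacy I anticipate is checking carefully that optional stopping goes through on the enlarged space with the bound $\tilde\tau \le \tau^{U_r}$ intact and that the joint law of $(W^x,\tilde\tau)$ truly reproduces $(W^x,\tau)$; everything else is essentially Itô plus monotone convergence. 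Since the paper cites \cite{GKP21} for a version of exactly this kind of statement, I would invoke that to bypass the randomization bookkeeping if a direct citation suffices.
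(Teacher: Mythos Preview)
Your proposal is correct, and in fact your closing remark is exactly what the paper does: its entire proof consists of (i) disposing of the trivial case $x\in U\setminus U_r$, where $\tau^{U_r}=0$ by convention, and (ii) for $x\in U_r$, invoking \cite[Lemma~2.3]{GKP21} directly, using that $\phi\in LSC(\overline{U_r})$. There is no mollification or It\^o argument in the paper itself; that work is outsourced to the citation.

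Your self-contained route via mollification, It\^o, optional stopping, and the randomization-by-independent-uniform trick is the standard way one would \emph{prove} the cited lemma, so the two approaches are not really in tension---yours is an unpacking of the black box. One small sharpening: for a superharmonic $\phi$ the convergence $\phi_\varepsilon(x)\uparrow\phi(x)$ actually holds for \emph{every} $x$ (not just a.e.), since the ball averages of a superharmonic function increase to the point value by the mean-value inequality together with lower semicontinuity; this also gives the uniform lower bound on $\overline{U_r}$ needed for monotone convergence in $\E[\phi_\varepsilon(W^x_\tau)]$. Otherwise the argument is sound.
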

\begin{proof}
 Observe that (\ref{eqn:superharmonic}) is 
 trivial for  $x \in U \setminus U_r$ as $\tau^D =0$ by our convention for those Brownian paths started from such $x$. 
For $x \in U_r$, we can apply \cite[Lemma 2.3]{GKP21} as $\phi$ is in $LSC(\overline{U_r})$. 
\end{proof}

Lemma~\ref{lem:maximum-principle-stopping} induces a  simple but important result for us,  Lemma~\ref{lem:stopping-lemmas}, which will be used in  Sections~\ref{sec:duality} and \ref{sec:saturation}. Note that we crucially use the condition $\nu \le f$ in its proof. 
\begin{lemma}\label{lem:stopping-lemmas}
Let $\tau\le \tau^U$ be a (randomized) stopping time with $W_0\sim \mu$ and
  $W_\tau \sim \nu$ for some $\nu$ with $\nu \le f$.
 For each $r>0$, define $\tau_r := \tau\wedge \tau^{U_r}$ ($:= \min \{ \tau, \tau^{U_r} \} $) and let $\nu_r$ be the distribution $W_{\tau_r} \sim \nu_r$ with  $W_0 \sim \mu$. 
 Then, 
 \begin{equation}\label{observation_2-1}
\lim_{r\to 0} \nu_r(E) = \nu(E) \hbox{ for any measurable subset } E\hbox{ of }  U.
\end{equation}
Moreover, if $\phi$ is superharmonic then
\begin{align}\label{eqn:maximum-good}
 \int \phi (x) d\mu(x) \ge \int \phi (y) d\nu(y).
\end{align}

\end{lemma}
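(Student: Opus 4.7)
The plan is to approximate $\nu$ by the measures $\nu_r$, each supported on the compact set $\bar U_r \Subset U$, and pass to the limit as $r \to 0^+$. Two structural inputs drive the argument: first, the exit times satisfy $\tau^{U_r} \uparrow \tau^U$ almost surely as $r \to 0^+$ (if the monotone limit $\tilde\tau < \tau^U$ on some event, then $W_{\tilde\tau} \in U$, but path continuity gives $W_{\tilde\tau} = \lim_r W_{\tau^{U_r}} \in \partial U$ since $W_{\tau^{U_r}}$ sits at distance exactly $r$ from $\partial U$ --- a contradiction); second, $\nu(\partial U) = 0$, which follows from $\nu \le f$ together with $|\partial U| = 0$ (the Lipschitz hypothesis on $\partial U$ enters here). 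Combining the first fact with $\tau \le \tau^U$ gives $\tau_r \uparrow \tau$ a.s., and path continuity upgrades this to $W_{\tau_r} \to W_\tau$ a.s., in particular $\nu_r \to \nu$ weakly.

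For \eqref{observation_2-1}, I first treat the case $E \subset U_s$ with $r < s$. The event $\{W_{\tau_r} \in E\}$ precludes $\tau_r = \tau^{U_r}$, since $W_{\tau^{U_r}} \in \partial U_r = \{d(\cdot,\partial U) = r\}$ is disjoint from $U_s$; hence on this event $\tau_r = \tau$ and $W_{\tau_r} = W_\tau$, so that
\[
\nu(E) - \nu_r(E) \;=\; \mathbb{P}(W_\tau \in E,\ \tau > \tau^{U_r}).
\]
On $\{\tau < \tau^U\}$ the compact set $\{W_t\}_{0 \le t \le \tau}$ sits at strictly positive distance from $\partial U$, so $\tau \le \tau^{U_r}$ for all sufficiently small $r$; on $\{\tau = \tau^U\}$ path continuity forces $W_\tau \in \partial U$, which is disjoint from $E \subset U$. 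Dominated convergence then yields $\nu_r(E) \to \nu(E)$. For arbitrary measurable $E \subset U$, fix $\epsilon > 0$ and use $\nu(\partial U) = 0$ to choose $s > 0$ with $\nu(U \setminus U_s) < \epsilon$. Conservation of total mass combined with the convergence just established on $U_s$ gives
\[
\nu_r(\bar U \setminus U_s) \;=\; \mu(\bar U) - \nu_r(U_s) \;\longrightarrow\; \nu(U \setminus U_s) < \epsilon,
\]
and a standard approximation on $E = (E \cap U_s) \sqcup (E \setminus U_s)$ completes the proof.

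For \eqref{eqn:maximum-good}, Lemma \ref{lem:maximum-principle-stopping} applied to $\tau_r \le \tau^{U_r}$ gives $\phi(x) \ge \mathbb{E}[\phi(W^x_{\tau_r})]$ for a.e.\ $x \in U$; integrating against $\mu$ and using Fubini yields $\int \phi\,d\mu \ge \int \phi\,d\nu_r$. Passing $r \to 0^+$, weak convergence $\nu_r \to \nu$ together with the lower semi-continuity of $\phi$ (and its boundedness below on $\bar U$, as may be assumed in context) via the Portmanteau theorem gives $\liminf_r \int \phi\,d\nu_r \ge \int \phi\,d\nu$, which combined with the previous inequality produces \eqref{eqn:maximum-good}. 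The main obstacle is the boundary behavior in \eqref{observation_2-1}: since every $\nu_r$ lives strictly inside $U$, any mass $\nu$ might deposit on $\partial U$ would be permanently invisible to the approximating sequence. This is precisely where the hypothesis $\nu \le f$ is indispensable, as the statement emphasizes.
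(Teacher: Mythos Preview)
Your proof is correct and takes a genuinely different route from the paper's for \eqref{observation_2-1}. The paper controls the singular part of $\nu_r$ concentrated on $\partial U_r$ via a harmonic-measure estimate: from $x \in \partial U_r$, the probability of reaching $\partial U_{\sqrt r}$ before $\partial U$ is $O(r^{\beta/2})$ for a Lipschitz exponent $\beta$, which forces $\nu_r(\partial U_r) \to 0$. Your approach instead works pathwise: on $\{\tau < \tau^U\}$ the stopped trajectory is a compact subset of $U$, hence eventually contained in $U_r$, so $\tau \le \tau^{U_r}$; and on $\{\tau = \tau^U\}$ the endpoint lies on $\partial U$, which carries no $\nu$-mass since $\nu \le f$ and $|\partial U| = 0$. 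This is more elementary and uses the Lipschitz hypothesis only through the consequence $|\partial U| = 0$ --- strictly weaker than the exterior-cone or exterior-ball conditions the paper invokes in Remark~\ref{assumption:U}; in effect you have established the lemma for any bounded open $U$ with Lebesgue-null boundary. The paper's route, in exchange, records the explicit monotone increase of the absolutely continuous density of $\nu_r$ to that of $\nu$, which it then feeds into the monotone convergence theorem for the limit step in \eqref{eqn:maximum-good}, in place of your Portmanteau argument.

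On that last step, your parenthetical ``boundedness below on $\bar U$, as may be assumed in context'' is the one soft point: the superharmonic functions $\psi$ appearing later in the dual class $\mathcal{D}$ are only assumed to lie in $L^1(U)$ and need not be bounded below on $\bar U$. The paper's monotone-convergence alternative sidesteps the need for a uniform lower bound (though it too leaves the handling of $\int \phi\, d\nu_r^{\rm sing}$ somewhat implicit). Neither version is fully airtight without a mild integrability or one-sided boundedness hypothesis on $\phi$, but this does not affect the downstream applications in the paper.
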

\begin{proof}

Observe that $\nu_r \leq \nu$ as measures on $U_r$, because, by the definition  of $\tau_r : = \tau\wedge \tau^{U_r}$, it follows that the only possibility for $\nu_r$ is bigger than $\nu$ (as a measure) is along $\partial U_r$, where $\nu_r$ has singular part. Since $\nu \leq f$ and $f$ is supported on $U$, $\lim_{r \to 0} \nu (U_r^c) =0$. 
  We show that the singular part of $\nu_r$ converges to zero.  
  
  \medskip
  
  For sufficiently small $r>0$, the probability of Brownian motion started from a point $x \in \partial U_r$ to hit $\partial U_{\sqrt{r}}$ before it hits $\partial U$ is bounded above by $C(r/\sqrt{r})^\beta$, for a dimensional constant $\beta>0$ depending on the Lipschitz constant of $U$. This implies 
	$$ \lim_{r \to 0} \nu_r(\partial U_r) \leq \lim_{r \to 0} (\nu(U_{\sqrt{r}}^c) + C r^{\beta/2} ) =0.$$ We thus conclude that $\nu_r$ converges to $\nu$ as a measure on $U$, namely \eqref{observation_2-1}.
In fact the density of  the absolutely continuous part of $\nu_r$ monotonically converges to the density of $\nu$. 

\medskip

Now note that \eqref{eqn:maximum-good} follows by applying \eqref{observation_2-1} to Lemma~\ref{lem:maximum-principle-stopping} because 
\begin{align*}
\int   \phi (x) d\mu(x) \ge \int \E \left[ \phi(W^x_{\tau_r} )\right] d\mu(x) = \int \phi(y ) d\nu_r(y) \to \int \phi(y) d\nu(y) \quad \hbox{ as } r\to 0^+.
\end{align*}
(For the last limit one can use the monotone convergence Theorem~\ref{thm:A}pplied to the absolutely continuous part of $\nu_r$.)
This completes the proof. 
\end{proof}

\medskip

\begin{remark}\label{assumption:U}
 Lemma~\ref{lem:stopping-lemmas} can be proved with more general domains $U$ beyond those with locally Lipschitz boundaries, though we choose not to pursue this for simplicity of presentation.  In particular  our main results in  Sections~\ref{sec:duality} and \ref{sec:saturation}  hold with domains $U$ for which the stopping times $\tau^*_r:=\tau^* \wedge \tau^{U_r}$ converge to $\tau^*$ in terms of their associated distributions when the target distribution has a bounded density: this property is  satisfied for instance,  by $U$ whose boundary is Lipschitz away from a finite number of inward or outward cusps. 
 
 \medskip
 
Lemma~\ref{lem:stopping-lemmas} also holds when $U$ satisfies the exterior cone condition: for any $x \in \partial U$, there is a cone $V \subset U^c$ with vertex at $x$, constant opening and height, and axis $e(x)$ depending on $x$. More generally, the lemma holds under a weaker condition, the exterior ball condition: for any $x \in \partial U$ and small  $r>0$, there is a ball $ B(y, s) \subset U^c \cap B(x,r)$ with $s \approx r$. This condition allows Koch snowflakes as an initial domain $U$. 

\medskip

 Furthermore in two dimension, Lemma~\ref{lem:stopping-lemmas} holds for any simply connected bounded open set $U$: when $d=2$, Beurling's projection theorem \cite[Theorem 9.3]{G-book} implies that the probability of the Brownian motion started from a point $x \in \partial U_r$ to hit $\partial U_{\sqrt{r}}$ before it hits $\partial U$ is bounded above by $C(r)$ with $\lim_{r \to 0} C(r) =0$, and hence the proof of the lemma works. 

\end{remark}

 We now turn to another important definition  relevant to the notion of superharmonicity with stopping times.
Given a measurable function 
$\phi: \bar{U} \to \R$, we define its {\it superharmonic envelope} $\phi^{sp}: \bar{U} \to \R$ as 
\begin{align}\label{eq:sp}
 \phi^{sp} (x) := \sup_{\tau \le \tau^U} \E \left[ \phi(W^x_\tau )\right].
\end{align}
where $\tau$ is any (randomized) stopping time such that $\tau \le \tau^U$  (see e.g. \cite{GKP21}). 

\begin{lemma} \label{lem:LSC}
 For $\phi \in C(\bar U)$,  the function  $\phi^{sp}:  U \to \R$ is lower semi-continuous and satisfies (\ref{eqn:superharmonic}).
\end{lemma}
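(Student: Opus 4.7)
The plan is to prove the LSC assertion via a coupling-and-inner-layer-truncation argument, and then to deduce (\ref{eqn:superharmonic}) for $\phi^{sp}$ by combining a dynamic programming principle with the already-established Lemma~\ref{lem:maximum-principle-stopping}.

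For LSC at a point $x_0\in U$, fix $\varepsilon>0$ and pick an admissible stopping time $\tau^*$ realizing $\mathbb{E}[\phi(W^{x_0}_{\tau^*})]\geq \phi^{sp}(x_0)-\varepsilon$. The key device is to first truncate to an inner layer: set $\tau^*_r := \tau^*\wedge \tau^{U_r}(W^{x_0})$. Since $\tau^{U_r}\nearrow \tau^U$ as $r\to 0^+$ by continuity of the Brownian path and openness of $U$, bounded convergence (using $\phi$ continuous and bounded on $\bar U$) gives $\mathbb{E}[\phi(W^{x_0}_{\tau^*_r})]\geq \phi^{sp}(x_0)-2\varepsilon$ for $r$ small enough. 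Couple all the Brownian motions through $W^x_t = x+B_t$ for a single driving Brownian motion $B$. For every sample path $\omega$, the curve $\{x_0+B_t(\omega):t\leq \tau^*_r(\omega)\}$ lies in $\overline{U_r}$ and hence at distance $\geq r$ from $\partial U$, so whenever $|x-x_0|<r/2$ the shifted path $x+B_t$ stays in $U$ throughout $[0,\tau^*_r]$; thus $\tau^*_r\leq \tau^U(W^x)$ is admissible for $W^x$ too. Continuity of $\phi$ on $\bar U$ and bounded convergence then yield
\[
\liminf_{x\to x_0}\phi^{sp}(x)\geq \lim_{x\to x_0}\mathbb{E}[\phi(x+B_{\tau^*_r})] = \mathbb{E}[\phi(x_0+B_{\tau^*_r})]\geq \phi^{sp}(x_0)-2\varepsilon,
\]
and sending $\varepsilon\to 0$ proves LSC on $U$.

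For (\ref{eqn:superharmonic}), it suffices by Lemma~\ref{lem:maximum-principle-stopping} to show that $\phi^{sp}$ is superharmonic in $U$ per Definition~\ref{def:conventional-superharmonic}, i.e., LSC plus the sub-mean-value property on balls. Fix $B=B(x,s)\subset U$. By the strong Markov property, for any admissible $\sigma_y$ started at $y:=W^x_{\tau^B}$ the concatenation $\tau^B\oplus\sigma$ satisfies $\tau^B\oplus\sigma\leq \tau^U(W^x)$ and is thus admissible. Choose $\varepsilon$-optimal $\sigma_y$ depending measurably on $y$ (Kuratowski--Ryll-Nardzewski applied to the LSC-and-hence-measurable $\phi^{sp}$), and set $g(y):=\mathbb{E}[\phi(W^y_{\sigma_y})]\geq \phi^{sp}(y)-\varepsilon$. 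Then
\[
\phi^{sp}(x)\geq \mathbb{E}[\phi(W^x_{\tau^B\oplus\sigma})]=\mathbb{E}[g(W^x_{\tau^B})]\geq \mathbb{E}[\phi^{sp}(W^x_{\tau^B})]-\varepsilon.
\]
Since $W^x_{\tau^B}$ is uniform on $\partial B$, sending $\varepsilon\to 0$ gives the spherical sub-mean-value inequality, hence classical superharmonicity, and Lemma~\ref{lem:maximum-principle-stopping} closes out (\ref{eqn:superharmonic}).

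The main obstacle is the boundary-sensitive coupling in the LSC step: a naive shift coupling could fail when the optimal path for $W^{x_0}$ approaches $\partial U$ very closely, as nearby starting points may produce substantially different exit times. The Lipschitz regularity of $\partial U$, which is exactly what powers Lemma~\ref{lem:stopping-lemmas} and the monotone convergence $\tau^{U_r}\nearrow \tau^U$, is what allows us to first pull back to a safely interior stopping time $\tau^*_r$ and then couple uniformly. A secondary technicality is the measurable selection of near-optimal stopping times in the DPP step, which is routine.
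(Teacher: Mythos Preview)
Your proof is correct, but the route differs from the paper's in an interesting way. The paper argues in the order \emph{measurability $\to$ DPP $\to$ LSC}: it first establishes measurability of $\phi^{sp}$ via a measurable selection argument, then applies the dynamic programming principle directly to obtain $\E[\phi^{sp}(W^x_\tau)]\le \phi^{sp}(x)$ for every $\tau\le\tau^U$ (which is already (\ref{eqn:superharmonic})), and finally infers lower semicontinuity from this mean-value inequality by citing an external reference (Silvestre's notes). You instead go \emph{LSC $\to$ DPP $\to$ Lemma~\ref{lem:maximum-principle-stopping}}: you prove LSC first by a hands-on shift-coupling after truncating to an inner layer $U_r$, then run DPP on balls to get classical superharmonicity, and close with Lemma~\ref{lem:maximum-principle-stopping}. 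Your coupling argument is self-contained and avoids the outside citation for LSC, which is a genuine gain in transparency; the paper's ordering, on the other hand, gets (\ref{eqn:superharmonic}) in one DPP stroke without needing the detour through the ball mean-value inequality and Lemma~\ref{lem:maximum-principle-stopping}. The DPP steps in the two proofs are essentially identical, both implicitly leaning on a measurable selection of near-optimizers.

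One small inaccuracy in your closing commentary: the monotone convergence $\tau^{U_r}\nearrow\tau^U$ holds for \emph{any} bounded open $U$ by path-continuity and a compactness argument, and does not use the Lipschitz hypothesis on $\partial U$. The Lipschitz condition enters the paper only through Lemma~\ref{lem:stopping-lemmas}, which you do not actually invoke here. This does not affect the validity of your argument.
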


\begin{proof}

We first show that $\phi^{sp}$ is a measurable function on $U$. 
To see this for each given $\delta>0$, use the measurable selection theorem to get a measurable function $x \in U \to \tau^{\delta, x}$, from $U$ to the space of randomized stopping times endowed with the $\hbox{weak}^*$-topology over the space of paths, such that  
the function $\psi_\delta (x) := \E \left[ \phi(W^x_{\tau^{\delta, x}} )\right] $ satisfies 
\begin{align*}
 \hbox{$ \psi_\delta (x) \ge  \phi^{sp}(x) -\delta$ for each $x \in U$. }
\end{align*}
Since $\psi_\delta \le \phi^{sp}$ from the definition of $\phi^{sp}$, we see that as $\delta \to 0$, $\psi_\delta \to \phi^{sp}$ for each $x$. Therefore, the measurability of $\psi_\delta$ will prove the measurability of $\phi^{sp}$, as a pointwise limit of measurable functions. 
Now since $\phi \in C(\bar U)$, the function $\tau \mapsto \E \left[ \phi(W^x_{\tau})\right]$ is continuous function, therefore the composition $x \mapsto \tau^{\delta, x} \mapsto \E \left[ \phi(W^x_{\tau^{\delta, x}})\right]$ is measurable. This proves the measurability of $\psi_\delta$.

\medskip

Next, the superharmonic mean value property of  $\phi^{sp}$ follows from the dynamic programming principle. For each stopping time $\tau \le \tau^U$,  note that 
$ \phi^{sp}(W^x_\tau ) =  \sup_{\tau \oplus \sigma\le \tau^U} \E \left[  \phi(W^x_{\tau \oplus \sigma} ) \right] $ by the definition \eqref{eq:sp} of the superharmonic envelope. Then 
\begin{align*}
  \E \left[ \phi^{sp}(W^x_\tau )\right] 
  =   \E \left[  \sup_{\tau \oplus \sigma\le \tau^U} \E \left[  \phi(W^x_{\tau \oplus \sigma} ) \right] \right] \le  \E \left[ \phi^{sp}(x)\right] = \phi^{sp} (x). \end{align*}
It follows that $\phi^{sp}$ satisfies \eqref{eqn:superharmonic}  and is lower semi-continuous (see   \cite[Proposition 2.8]{silvestre}).
 \end{proof}

\subsection{Strictly subharmonic order}\label{section:subharmonic_order} 

Two measures $\mu, \nu$ on $\R^d$ with equal volume are said to be in {\it subharmonic order} $\mu \leq_{SH} \nu$, if for each open set $\mathcal{O}$ containing the support of $\mu+\nu$,
$$
\int \varphi(x) d\mu(x) \leq \int \varphi(x)d\nu(x) \quad \hbox{ for each smooth subharmonic function in } \mathcal{O}. 
$$
It is known that for compactly supported $\mu$ and $\nu$, $\mu\leq_{SH} \nu$ if and only if there is a randomized stopping time $\tau$ for the Brownian motion such that 
\begin{equation}\label{eligibility}
W_0\sim \mu \hbox{ and }W_{\tau}\sim \nu \hbox{ with }\mathbb{E}(\tau) <\infty.
\end{equation} 
We say that such an order $\mu\leq_{SH} \nu$ is with respect to $U$ if   such  a (randomized stopping)  time $\tau$ satisfies $\tau\le \tau_U$. 
 For many types of cost functions such as $\mathcal{C}(\tau) = \mathbb{E}(\tau^2)$, there exists an optimal (randomized) stopping time that satisfies \eqref{eligibility} and minimizes the cost (see \cite{beiglboeck2017optimal, PDE19, GKP21}). In fact, the optimal stopping time $\tau^*$ is  a non-randomized stopping time. In our problem we will first optimize the target measure in the primal problem \eqref{eqn:primal-problem} for the given measure $\mu$, and then use the optimal stopping time for the optimal target measure $\nu^*$ to establish the connection to the Stefan problem. 

\medskip

For our analysis we also need a stronger ordering that prevents any instant stopping of the Brownian particles when they follow the optimal stopping time. This is  to ensure that the corresponding solution of the Stefan problem evolves with the initial distribution $\mu$. 

\begin{definition}\label{strict}
 A pair ($\mu$, $\nu$) is {\it strictly subharmonically ordered} if $\mu\leq_{SH} \nu$ and  its optimal stopping time $\tau^*$ with the cost   $\mathcal{C}(\tau) = \mathbb{E}(\tau^2)$, is strictly positive. We also say such an order is with respect to $U$, if this $\tau$ satisfies $\tau \le \tau_U$.
\end{definition}

\begin{remark}\label{example-SO}
Note that if $\mu <1$ and $\mu\le_{SH} \chi_E$ for some $E$, then such a subharmonic order is strict,  due to \cite[Theorem 4.7 (D1)]{PDE19}.
 It is also easy to construct $\mu$ whose density is larger than 1 away from the boundary of its support, which are still strictly subharmonically ordered to $\nu$ for some $\nu <1$. For  a simple example, given $0<\delta<1$ and $M>1$, let $\mu = \delta \chi_{B(0,1)} + M \chi_{B(0,r) }$ for some $r>0$. If $r$ is sufficiently small (i.e., $r \leq r_0(n, \delta, M)$), then $\mu\leq_{SH} \nu:= \delta' \chi_{B(0,1)}$ for some $\delta< \delta'<1$ and its optimal stopping time $\tau^*>0$. 
 For more general examples, see Corollary~\ref{cor:existence} and Remark~\ref{rmk:many-examples}.
\end{remark}
\bigskip

\section{Primal problem}\label{sec:primal}
For $\mu$, $f$ and $u$ as given in Section~\ref{sec:prelim},  let us consider the following optimization problem about stopping times $\tau$ and their target distributions $W_\tau\sim \nu$:
\begin{align}\label{eqn:primal-problem}
 P(\mu, f, u) = \inf \left \{ \int u (x) d\nu(x)  \ \Big| \  \tau \le \tau^U, W_\tau \sim \nu \le f \chi_U \ \& \ W_0 \sim \mu  \right\}.
\end{align}
The constraint $\nu \leq f\chi_U$ means that $\nu$ has a density which is less than or equal to $f \chi_U$ a.e. It is easy to see that $ P(\mu, f, u)$ has an optimal solution.

\begin{theorem}[Existence of a primal solution]\label{th:primal-existene}
The problem $ P(\mu, f, u)$ has an optimal solution if the eligible set of $\nu$ is non-empty. 
\end{theorem}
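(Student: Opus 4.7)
The plan is the direct method. Let $(\tau_n,\nu_n)$ be a minimizing sequence of feasible pairs: $\tau_n \le \tau^U$, $W_0 \sim \mu$, $W_{\tau_n} \sim \nu_n \le f\chi_U$, and $\int u\, d\nu_n \to P(\mu,f,u)$. Note the infimum is finite: it is bounded below by $0$ since $u\ge 0$, and since the feasible set is non-empty by hypothesis it is bounded above by $\|u\|_{\infty}\|f\|_{\infty}|U|$.

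First I would extract a weak-$*$ limit of the target measures. Because $U$ is bounded and $f$ is bounded, the $\nu_n$ have uniformly bounded densities supported in the compact set $\bar U$; by Banach--Alaoglu, after passing to a subsequence, $\nu_n \rightharpoonup \nu^*$ weakly-$*$ as finite Radon measures on $\bar U$. The pointwise density bound $\nu^* \le f\chi_U$ is preserved in the limit by testing against non-negative continuous functions.

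Next I would produce a randomized stopping time $\tau^* \le \tau^U$ with $W_{\tau^*}\sim \nu^*$; this is the one nontrivial step. I would invoke the compactness theory for randomized stopping times in the Baxter--Chacon framework (as used in \cite{Kim-Kim21}): on the fixed filtered probability space, the family $\{\tau : \tau \le \tau^U\}$ of randomized stopping times is sequentially compact in a topology for which (i) the bound $\tau \le \tau^U$ is closed, and (ii) the evaluation $\tau \mapsto \mathrm{Law}(W_\tau)$ is continuous with respect to weak convergence of measures on $\R^d$. (Property (ii) relies on continuity of the Brownian sample paths together with finiteness of $\E[\tau^U]$ from the boundedness of $U$.) Passing to a further subsequence gives $\tau_n \to \tau^*$ with $\tau^* \le \tau^U$, and by (ii), $\mathrm{Law}(W_{\tau^*}) = \nu^*$, so $(\tau^*,\nu^*)$ is feasible.

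Finally, since $u \in C(\bar U)$ and all $\nu_n$ are supported in $\bar U$, weak-$*$ convergence yields
\[
\int u\, d\nu^* = \lim_n \int u\, d\nu_n = P(\mu,f,u),
\]
so $(\tau^*,\nu^*)$ attains the infimum. The main obstacle is really just the appeal to Baxter--Chacon-type compactness together with continuity of $\tau \mapsto \mathrm{Law}(W_\tau)$; these are standard in the Skorokhod-embedding and optimal-stopping literature and are invoked in \cite{Kim-Kim21}, so I would cite them rather than reprove them from scratch.
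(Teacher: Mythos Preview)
Your proof is correct and follows essentially the same route as the paper's: take a minimizing sequence, use weak-$*$ compactness of randomized stopping times bounded by $\tau^U$ (citing the same literature), pass to the limit, and use continuity of $u$ to conclude. The only cosmetic difference is that the paper adds a final sentence invoking \cite{beiglboeck2017optimal, PDE19} to replace the limiting randomized stopping time by a non-randomized one with the same target distribution, whereas you stop at the randomized $\tau^*$; since randomized stopping times are admissible in $P(\mu,f,u)$, your version already suffices.
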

\begin{proof}
 We consider the stopping times $\tau$, and its corresponding distributions $\nu$,  that are admissible for the problem $P(\mu, f, u)$. Let  $(\tau_i, \nu_i)$ be a  minimizing sequence. 
 We can view $\tau_i$ as randomized stopping times (see e.g. \cite{beiglboeck2017optimal} or \cite{Kim-Kim21}), which  converge via the weak$^*$-topology to a randomized stopping time $\tau_\infty$, with $\tau_\infty$   and its distribution giving  the pair $(\tau_\infty, \nu_\infty)$. Notice that $\nu_i \to \nu_\infty$ in the weak$^*$-topology, therefore we can keep $\nu_\infty \le f\chi_U$ and also 
\begin{align*}
 P(\mu, f, u) =\lim_{i\to\infty} \int u d\nu_i =  \int u d\nu_\infty.
\end{align*}
 From a result of \cite{beiglboeck2017optimal, PDE19}, there is a corresponding (non-randomized) stopping time $\tau^*$ satisfying the conditions of $P(\mu, f, u)$, therefore $\nu_\infty$ is admissible. This completes the proof. 
\end{proof}
\begin{remark}
In Corollary~\ref{cor:uniqueness-primal}, we prove the uniqueness of an optimal solution to the primal problem  $ P(\mu, f, u)$, assuming 
\begin{align}\label{eqn:SHmu}
 &\hbox{  $\mu$ is subharmonically ordered with respect to $U$,} \\\nonumber
  & \qquad \qquad  \hbox{with  some  $\nu$ with $\nu\le\delta f\chi_U$ for some $\delta \in (0,1)$,}
\end{align}
which is a slightly weaker condition than $(C_0)$ in Definition~\ref{def:C0}, as $\mu$ may not be positive everywhere in $U$.  Note that \eqref{eqn:SHmu} is automatically satisfied if $\mu \le \delta f $.

\end{remark}

One of the motivations for \eqref{eqn:primal-problem} is that its optimal target measure will be supported near the boundary of $U$ as much as possible,  due to the super-averaging  property  (\ref{eqn:superharmonic}) of the superharmonic function $u$: the strict superharmonicity $\Delta u <0$ makes the stopping time $\tau$ to be as large as possible. To see this, observe that for $W_0\sim \mu$, $W_{\tau_i} \sim \nu_i$, $i=1,2$, if $\tau_1 \le \tau_2$ then
\begin{align*}
 \int u (x) d\nu_1 \ge \int u(x) d\nu_2, \hbox{ and ``$>$'' if $Prob[\tau_1 < \tau_2] >0$}
\end{align*}
as  we see from Ito's formula
\begin{align*}
 \int u (x) d\nu_i =\int u(x) d\mu(x) + \mathbb{E} \left[ \int_0^{\tau_i} \frac{1}{2}\Delta u (W_t) dt\right].
\end{align*}

For the same reason, one can expect that the optimal strategy for the Brownian particles is to move as much as possible so that the target measure is saturated in its support: see Proposition~\ref{prop:easy-sat} where this argument is presented in the interior of the support of the target measure. A rigorous proof of saturation will be given in Section~\ref{sec:saturation} (Theorem~\ref{th:saturation}), where we present a delicate argument using the dual variable to avoid the difficulty arising from the unknown topological nature of the support for the optimal target measure. 

\medskip

 If the optimal stopping time were strictly positive, together with the saturation property,  we can relate $P(\mu, f, u)$ to the supercooled Stefan problem \eqref{eq:stefan} due to the results achieved in \cite{Kim-Kim21}: see Section~\ref{sec:main}. Due to the fact that the target measure is supported up to the boundary of $U$, it will follow that the optimal stopping time for $P(\mu, f,u)$ provides a global solution to $(St)$ with the initial domain $U$, yielding our Theorem~\ref{thm:A}.  The strict positivity of the stopping time can be ensured for instance if $\mu$ satisfies $|\{\mu=f\}|=0$, due to Theorem 8.3 (a) in \cite{Kim-Kim21}.  Later in the paper we will impose the assumption  \eqref{eqn:SHmu} 
 which  is  also used to obtain a technical compactness estimate for the dual attainment (Theorem~\ref{th:dual-attain}).

 \medskip

Of course to consider $P(\mu, f, u)$, we also need to ensure that the eligible set of $\nu$ is non-empty. This is certainly the case with the condition  \eqref{eqn:SHmu} since then $\mu$ is in the eligible set. Our conjecture says that in principle our approach should hold  under the assumption $(C)$ which says (a) the admissible set of $\nu$ is non-empty and (b) the optimal stopping time is strictly positive.

\section{Duality}\label{sec:duality}

We consider the dual problem:
 \begin{align}\label{eqn:dual-problem-new}
 D(\mu, f, u) =  \sup_{ ( \phi,   \, \psi ) \in \mathcal{D} } \left[\int \phi (y) df (y)  - \int \psi (x)d\mu(x) \right]
\end{align}

where 
\begin{equation}\label{eqn:D-new}
  \mathcal{D} : = \{ (\phi, \psi) \in L^1(U) \ | \ \phi \le 0,\,\,   \psi \ge \phi -u \hbox{ in } U \hbox{ and } \psi \hbox{ is superharmonic 
  }\}.
\end{equation}

We first establish the weak duality:
\begin{theorem}[Weak duality]\label{th:weak-dual}
For $\mu$, $f$, $u$ as given in Section \ref{sec:Assumptions and notation},
\begin{align*}
 D(\mu, f, u) \le P(\mu, f, u).
\end{align*}
\end{theorem}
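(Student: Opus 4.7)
The plan is to prove weak duality by a direct pointwise-to-integral comparison, combining the two defining constraints of the feasible set $\mathcal{D}$ with the two constraints on admissible pairs $(\tau,\nu)$ for the primal problem. Specifically, I would fix arbitrary admissible data on both sides: an admissible $(\tau,\nu)$ with $\tau \le \tau^U$, $W_0\sim\mu$, $W_\tau\sim\nu$, $\nu \le f\chi_U$, and an arbitrary $(\phi,\psi) \in \mathcal{D}$. The strategy is to show
\begin{align*}
\int u \, d\nu \;\ge\; \int \phi \, df - \int \psi \, d\mu,
\end{align*}
and then conclude by taking the infimum on the left and supremum on the right.

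First I would rewrite the inequality $\psi \ge \phi - u$ as $u \ge \phi - \psi$ and integrate it pointwise against $\nu$, which gives
\begin{align*}
\int u\,d\nu \;\ge\; \int \phi\,d\nu \;-\; \int \psi\,d\nu.
\end{align*}
Next, the term $\int \phi\,d\nu$ is handled by the sign constraint $\phi \le 0$ together with $\nu \le f\chi_U$: since $-\phi \ge 0$ is being integrated against a measure dominated by $f\chi_U\,dx$, one obtains $\int(-\phi)\,d\nu \le \int(-\phi)\,df$, i.e.\ $\int\phi\,d\nu \ge \int\phi\,df$.

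For the remaining term $-\int\psi\,d\nu$, this is precisely where Lemma~\ref{lem:stopping-lemmas} enters. The lemma applies because $\psi$ is superharmonic, $\nu \le f$ is a measure with bounded density, $\tau\le\tau^U$, and $W_0\sim\mu$, $W_\tau\sim\nu$. Its conclusion \eqref{eqn:maximum-good} reads $\int\psi\,d\mu \ge \int\psi\,d\nu$, so $-\int\psi\,d\nu \ge -\int\psi\,d\mu$. Assembling the three estimates yields
\begin{align*}
\int u\,d\nu \;\ge\; \int\phi\,df - \int\psi\,d\mu,
\end{align*}
and taking the infimum over admissible $(\tau,\nu)$ and supremum over $(\phi,\psi)\in\mathcal{D}$ produces $P(\mu,f,u) \ge D(\mu,f,u)$.

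There is essentially no serious obstacle in this argument: the heart of the matter is simply recognising that each constraint in the definition of $\mathcal{D}$ pairs with exactly one of the primal constraints. The only subtlety worth being careful about is the direction of the inequalities forced by the sign condition $\phi \le 0$, and the applicability of Lemma~\ref{lem:stopping-lemmas} (which already includes the convention that $\tau = 0$ on Brownian paths starting outside $U$, so that paths originating from $\mathrm{supp}\,\mu$ with mass outside $U$ cause no issue since $\mu$ is supported in $\overline{U}$). Strong duality and dual attainment — which are the genuinely hard parts — are deferred to later sections.
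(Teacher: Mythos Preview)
Your proof is correct and follows essentially the same approach as the paper's: both combine the pointwise constraint $\psi \ge \phi - u$, the sign condition $\phi \le 0$ together with $\nu \le f$, and the superharmonic inequality \eqref{eqn:maximum-good} from Lemma~\ref{lem:stopping-lemmas}. The only cosmetic difference is that you start from $\int u\,d\nu$ and bound it below, whereas the paper starts from $\int \phi\,df - \int \psi\,d\mu$ and bounds it above; the ingredients and their use are identical.
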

\begin{proof}
Consider an admissible primal/dual pair $(\nu, (\phi, \psi))$ for $P(\mu, f, u)$ and $D(\mu, f, u)$. Namely, for a stopping time $\tau$  such that 
$ \tau \le \tau^U, W_\tau \sim \nu \le f \chi_U \ \& \ W_0 \sim \mu $ and 
$(\phi, \psi) \in \mathcal{D}$ as in \eqref{eqn:D-new}.
It suffices to show that
\begin{align*}
  \int \phi df - \int \psi d\mu \le  \int ud\nu.
\end{align*}

\medskip

The subharmonic ordering \eqref{eqn:maximum-good} and  the fact $\psi \ge \phi -u$ yield  
\begin{align}\label{observation_1}
 \int \psi d\mu   \ge \int \psi d\nu \ge  \int  (\phi-u)  d\nu.
\end{align}

Now let us consider the following inequalities: 
\begin{align*}
 \int \phi df - \int \psi d\mu
 & \le \int \phi df- \int (\phi -u)d\nu \quad \hbox{ (due to \eqref{observation_1})} \\
 & = \int \phi(df-d\nu) +\int ud\nu \\
 & \le \int u d\nu \quad \hbox{( since $\phi \le 0$, $\nu\le f$ on $U$)}. 
\end{align*}

 Thus  proves the weak duality.
\end{proof}

We in fact have a no-duality-gap result which is  crucial  for the present paper. For its proof, we introduce a modified dual problem:
\begin{align}\label{eqn:dual-problem}
	D'(\mu, f, u) =  \sup_{ ( \phi,   \, \psi ) \in \mathcal{D}' } \left[\int \phi (y) df (y)  - \int \psi (x)d\mu(x) \right]
\end{align}
where 
	\begin{align*}
		\mathcal{D}' : = \{ (\phi, \psi) \in  L^1(U)  \ | \ \phi \le 0 \, \ \&  \ \psi (x) \ge \E\left[(\phi  -u) (W^x_\tau) \right] \hbox{ for any $\tau \le \tau^U$}\}.
\end{align*}
Then since $\mathcal{D}' \subset \mathcal{D}$, we observe 
\begin{align} \label{D'D}
	D'(\mu, f, u) \leq 	D(\mu, f, u).
\end{align}

\begin{theorem}[No Duality Gap]\label{th:dual}
For $\mu$, $f$, $u$  as given in Section \ref{sec:Assumptions and notation}, 
\begin{align}\label{D=D'}
 P(\mu, f, u)= D'(\mu, f, u)
\end{align}
and hence $ P(\mu, f, u)= D'(\mu, f, u) =  D(\mu, f, u)$.
\end{theorem}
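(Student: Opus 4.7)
Since Theorem~\ref{th:weak-dual} combined with \eqref{D'D} already gives $D'(\mu,f,u)\leq D(\mu,f,u)\leq P(\mu,f,u)$, it suffices to establish the reverse inequality $P(\mu,f,u)\leq D'(\mu,f,u)$. My plan is to realize $P$ and $D'$ as the two sides of a single minimax problem over randomized stopping times and Lagrange multipliers, and then invoke Sion's minimax theorem.

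Let $\mathcal{T}$ denote the convex set of randomized stopping times $\tau$ with $\tau\le\tau^U$ and $W_0\sim\mu$, and, for bounded measurable $\phi:\bar U\to\R$ with $\phi\leq 0$, define the Lagrangian
\begin{align*}
 L(\tau,\phi):=\E\bigl[(u-\phi)(W_\tau)\bigr]+\int_U \phi\,f\,dx
 = \int u\,d\nu_\tau + \int_U \phi\,(f-\nu_\tau)\,dx.
\end{align*}
The second form shows that $\sup_{\phi\leq 0}L(\tau,\phi)=\int u\,d\nu_\tau$ when $\nu_\tau\le f\chi_U$ and $+\infty$ otherwise, hence $P(\mu,f,u)=\inf_{\tau\in\mathcal{T}}\sup_{\phi\leq 0}L(\tau,\phi)$. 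Conversely, the first form combined with the definition \eqref{eq:sp} of the superharmonic envelope yields
\begin{align*}
\inf_{\tau\in\mathcal{T}}L(\tau,\phi)
 = -\int (\phi-u)^{sp}\,d\mu+\int_U \phi\,f\,dx,
\end{align*}
since $\inf_{\tau\in\mathcal{T}}\E[(u-\phi)(W_\tau)]=-\sup_{\tau\in\mathcal{T}}\E[(\phi-u)(W_\tau)]$. Lemma~\ref{lem:LSC} ensures that $(\phi-u)^{sp}$ is lower semi-continuous and superharmonic, and it is the pointwise-smallest $\psi$ making $(\phi,\psi)$ admissible for $\mathcal{D}'$. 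Therefore $\sup_{\phi\leq 0}\inf_{\tau\in\mathcal{T}} L(\tau,\phi)=D'(\mu,f,u)$, and the theorem reduces to the minimax identity $\inf_\tau\sup_\phi L=\sup_\phi\inf_\tau L$.

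To apply Sion's minimax theorem I would first restrict $\phi$ to the convex set $\mathcal{K}$ of continuous functions on $\bar U$ with $-M\le\phi\le 0$ for a large constant $M$, equipped with uniform convergence. The set $\mathcal{T}$ is convex and compact in the weak$^*$-topology of randomized stopping times, a fact used throughout \cite{beiglboeck2017optimal, PDE19, Kim-Kim21}. The Lagrangian is affine in each variable; for $\phi\in\mathcal{K}$ the map $\tau\mapsto\E[(u-\phi)(W_\tau)]$ is weak$^*$-continuous because $u-\phi$ is continuous and bounded on $\bar U$, while $\phi\mapsto L(\tau,\phi)$ is clearly continuous. Sion's theorem then yields the minimax equality on $\mathcal{T}\times\mathcal{K}$. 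To lift the continuity/boundedness restriction on $\phi$, I would argue that a nearly-optimal $\phi$ in $D'$ satisfies a uniform $L^\infty$ bound (since $u$ and $f$ are bounded, and an arbitrarily negative $\phi$ makes $\int\phi f$ arbitrarily negative), after which a standard mollification plus dominated convergence extends the equality to arbitrary admissible $\phi$.

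The main obstacle I anticipate is this last step together with the continuity analysis in the minimax, since one must verify that $\tau\mapsto\E[(u-\phi)(W_\tau)]$ is weak$^*$-continuous while preserving $\tau\le\tau^U$ under weak$^*$-limits, and then pass from continuous bounded $\phi$ to general measurable $\phi$ while the envelope $(\phi-u)^{sp}$ behaves well under approximation. The compactness and continuity framework for randomized stopping times developed in \cite{beiglboeck2017optimal, Kim-Kim21} will be the chief technical inputs. Once the minimax equality is established, combining it with the weak duality in Theorem~\ref{th:weak-dual} immediately yields $P(\mu,f,u)=D'(\mu,f,u)=D(\mu,f,u)$, completing the proof.
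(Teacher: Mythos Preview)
Your direct-minimax approach is a legitimate alternative to the paper's argument. The paper instead applies Fenchel--Rockafellar duality on $E=C(\bar U)$ to the convex functionals $\Theta(\phi)=-\int\phi\,df+\int(\phi-u)^{sp}\,d\mu$ and $\Xi(\phi)=0$ if $\phi\le 0$, $+\infty$ otherwise, and uses Sion's theorem only \emph{inside} the computation of the Legendre transform $\Theta^*(-\nu)$ to swap a sup over $\phi$ with an inf over stopping times. Both routes pass through the same auxiliary quantity $\tilde D:=\sup_{\phi\in C(\bar U),\,\phi\le 0}\big[\int\phi\,df-\int(\phi-u)^{sp}\,d\mu\big]$, establish $P=\tilde D$, and then close via $\tilde D\le D'\le D\le P$. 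Your single minimax is arguably more economical; the paper's decomposition has the advantage of fitting the familiar Kantorovich-duality template.

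Your execution, however, contains a real gap. The truncation $-M\le\phi\le 0$ is unnecessary, and the lifting argument you propose is incorrect: the claim that nearly-optimal $\phi$ obey a uniform $L^\infty$ bound ``since an arbitrarily negative $\phi$ makes $\int\phi f$ arbitrarily negative'' ignores that $-\int(\phi-u)^{sp}\,d\mu$ simultaneously tends to $+\infty$. Indeed, in the dual-attainment proof the paper obtains only an $L^1$ bound on maximizing $\phi_i$, and even that requires the extra hypothesis \eqref{eqn:SHmu} with $\delta<1$ (see Remark~\ref{rmk:delta}); no $L^\infty$ bound is available in general. The fix is simply to drop the lower bound: Sion's theorem requires only \emph{one} of the two sets to be compact, and $\mathcal{T}$ already is, so take $Y=\{\phi\in C(\bar U):\phi\le 0\}$ with the sup norm. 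For such $\phi$ the map $\tau\mapsto\E[(u-\phi)(W_\tau)]$ is weak$^*$-continuous, so Sion yields $\min_\tau\sup_\phi L=\sup_\phi\min_\tau L$ directly. The right side equals $\tilde D\le D'$, and the left side equals $P$ because continuous $\phi\le 0$ already suffice to send $\sup_\phi L(\tau,\phi)$ to $+\infty$ whenever $\nu_\tau\not\le f\chi_U$ (including singular mass on $\partial U$). No mollification or passage to measurable $\phi$ is needed.
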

Note that the last equality of Theorem~\ref{th:dual} follows from  Theorem~\ref{th:weak-dual} with (\ref{D'D}), (\ref{D=D'}) since 
$$ P(\mu, f, u)= D'(\mu, f, u) \leq  D(\mu, f, u) \leq P(\mu, f, u).$$

Before proving this theorem in Section~\ref{sec:dual-proof}, 
 let us give heuristics that utilize the `min-max' principle, where ``$=$'' is a nonrigorous part:
\begin{align*}
 P(\mu, f, u) &=\inf_{\tau \le \tau^U \& \, W_0 \sim \pi_1 \& W_\tau \sim \pi_2}  \left \{ \int u d\pi_2 + \sup_{ \phi \le 0, \, \psi} \left[\int \phi (df -d\pi_2) + \int \psi ( d\pi_1 - d\mu) \right]   \right\}\\
 & \hbox{``}= \hbox{''}\sup_{ \phi \le 0, \, \psi}  \left\{ \inf_{\tau \le \tau^U \& \, W_0 \sim \pi_1 \& W_\tau \sim \pi_2}   \left[\int u d\pi_2 + \int \phi (df -d\pi_2) + \int \psi (d\pi_1 -d \mu) \right]   \right\}.\\
 \end{align*}
 Then after rearranging,  the last expression becomes
 \begin{align*}
 & = \sup_{ \phi \le 0, \, \psi}  \left\{ \inf_{\tau \le \tau^U \& \, W_0 \sim \pi_1 \& W_\tau \sim \pi_2}   \left[\int \phi df - \int \psi d\mu + \int (u -\phi) d\pi_2 +  \int \psi d\pi_1 \right]   \right\}\\
 & = \sup_{ \phi \le 0, \, \psi}  \left\{ \int \phi df - \int \psi d\mu + \inf_{\tau \le \tau^U \& \, W_0 \sim \pi_1 \& W_\tau \sim \pi_2}    \E [  u(W_\tau) -\phi(W_\tau)+  \psi (W_0)]  \right\}.
 \end{align*}
 Here the infimum in the last expression goes to $-\infty$ by choosing $\pi_1$ and $ \pi_2$ which are not necessarily probability measures, unless
 $\psi (W_0) \ge \phi (W_\tau) -  u (W_\tau) \hbox{ a.s. for any $\tau \le \tau^U$} $. Therefore, the last expression is the same as 
 \begin{align*}
 & \sup \left\{ \int \phi df - \int \psi d\mu \ \Big| \  \phi \le 0  \ \& \ \psi (W_0) \ge \phi (W_\tau) -  u (W_\tau) \hbox{ a.s. for any $\tau \le \tau^U$}   \right\}.
 \end{align*}
This gives an informal proof that $ P(\mu, f, u)= D'(\mu, f, u)
$. Below we give a rigorous proof. 

 \subsection{Proof of Theorem~\ref{th:dual}}\label{sec:dual-proof}
 Define
 \begin{align*}
 \tilde D(\mu, f, u) =  \sup_{ \phi \in C(\bar U) \ \&\  \phi \le 0 } \left[\int \phi (y) df (y)  - \int (\phi-u)^{sp} (x) d\mu(x) \right].
\end{align*}
Note that obviously $\tilde D(\mu, f, u) \le D'(\mu, f, u)$ as the pair $(\phi, (\phi -u)^{sp})$, with $0 \ge \phi \in C(\bar U)$, satisfies the  admissibility for $D'(\mu, f, u)$. 
Therefore it suffices to prove $\tilde D(\mu, f, u)  = P(\mu, f, u)$ because of the weak duality (Theorem~\ref{th:weak-dual}).

We apply  Fenchel-Rockafellar duality (see \cite[Theorem 1.9]{V1}) and  Sion's minimax theorem,  namely,\begin{theorem}[Fenchel-Rockafellar duality, Theorem 1.9 of \cite{V1}]\label{thm:F-R-duality}
Let $E$ be a normed vector space, $E^*$ its topological dual space, and $\Theta$, $\Xi$ two convex functions on $E$ with values in $\R \cup \{+\infty\}$. Let $\Theta^*$, $\Xi^*$ be the Legendre-Fenchel transforms of $\Theta$, $\Xi$ respectively. Assume that there exists $z_0 \in E$ such that 
\begin{align*}
 \Theta (z_0) < +\infty, \quad \Xi(z_0) < +\infty \quad
 \hbox{and $\Theta$ is continuous at $z_0$.}
\end{align*}
Then,
\begin{align}\label{eqn:F-R}
 \inf_E [\Theta + \Xi] = \max_{z^* \in E^*} \left[ - \Theta^* ( -z^*) - \Xi^*(z^*)\right].
\end{align}
(Here $\max$ on the righthand side is a part of the result.)
\end{theorem}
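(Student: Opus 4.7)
The plan is to prove this classical result by the standard geometric Hahn--Banach argument: weak duality follows directly from the Fenchel inequality, while strong duality together with attainment is obtained by separating the epigraph of $\Theta$ from the hypograph of $M-\Xi$ in $E\times\R$, where $M:=\inf_E[\Theta+\Xi]$.

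First I would establish weak duality. For every $z\in E$ and $z^*\in E^*$, the defining inequalities $\Theta^*(-z^*)\ge \langle -z^*,z\rangle -\Theta(z)$ and $\Xi^*(z^*)\ge \langle z^*,z\rangle -\Xi(z)$ add to give
\begin{align*}
\Theta(z)+\Xi(z)\ge -\Theta^*(-z^*)-\Xi^*(z^*).
\end{align*}
Taking the infimum in $z$ and the supremum in $z^*$ yields $M\ge \sup_{z^*\in E^*}[-\Theta^*(-z^*)-\Xi^*(z^*)]$. The feasibility hypothesis $\Theta(z_0)+\Xi(z_0)<+\infty$ gives $M<+\infty$; the degenerate case $M=-\infty$ makes the right side $-\infty$ as well and is attained trivially by every $z^*$, so from here on I assume $M\in\R$.

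Next I produce a dual optimizer by geometric separation. In $E\times\R$ introduce the convex sets
\begin{align*}
A:=\{(z,t):\Theta(z)\le t\},\qquad B:=\{(z,t): t\le M-\Xi(z)\}.
\end{align*}
By the definition of $M$, their interiors are disjoint: the pair of inequalities $\Theta(z)<t$ and $t\le M-\Xi(z)$ would give $\Theta(z)+\Xi(z)<M$, contradicting the definition of $M$. Continuity of $\Theta$ at $z_0$ ensures $\mathrm{int}(A)\ne\emptyset$, since any point $(z_0,t)$ with $t>\Theta(z_0)$ lies in the interior of $A$. The geometric Hahn--Banach theorem therefore provides a closed affine hyperplane $\{(z,t): \langle z^*,z\rangle+\alpha t=c\}$, with $(z^*,\alpha)\in E^*\times\R$ nonzero and $c\in\R$, such that $\langle z^*,z\rangle+\alpha t\ge c$ on $A$ and $\langle z^*,z\rangle+\alpha t\le c$ on $B$.

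The final step is to read off the dual optimizer, and this is where the main technical difficulty lies. Since $A$ contains the vertical ray $\{(z_0,t):t\ge \Theta(z_0)\}$, the functional must satisfy $\alpha\ge 0$. The delicate point, and the only place where the full strength of the continuity hypothesis is used, is ruling out $\alpha=0$: if $\alpha=0$, then $\langle z^*,\cdot\rangle\ge c$ on the neighborhood of $z_0$ where $\Theta$ is finite (an open set by continuity at $z_0$) and $\langle z^*,\cdot\rangle\le c$ on $\{\Xi<+\infty\}\ni z_0$, which forces $\langle z^*,\cdot-z_0\rangle\equiv 0$ on a neighborhood of $z_0$ and hence $z^*=0$, contradicting nontriviality. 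After normalizing $\alpha=1$, substituting $t=\Theta(z)$ into the first inequality gives $\Theta^*(-z^*)\le -c$, and substituting $t=M-\Xi(z)$ into the second gives $\Xi^*(z^*)\le c-M$. Adding yields $-\Theta^*(-z^*)-\Xi^*(z^*)\ge M$, which together with weak duality proves the identity \eqref{eqn:F-R} and shows that the supremum is attained at this $z^*$.
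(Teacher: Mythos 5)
The paper does not prove this statement; it is quoted verbatim as Theorem~1.9 of Villani's \emph{Topics in Optimal Transportation} and used as a black box in the proof of Theorem~\ref{th:dual}, so there is no in-paper proof to compare against. Your argument is the standard one (and is essentially the proof given in the cited reference as well as in Brezis): weak duality from the Fenchel inequality, then separation of the epigraph of $\Theta$ from the hypograph of $M-\Xi$ in $E\times\R$ via geometric Hahn--Banach, with the continuity hypothesis doing exactly two jobs — guaranteeing $\operatorname{int}(A)\neq\emptyset$ and ruling out a vertical separating hyperplane ($\alpha=0$). I checked the details: the disjointness $\operatorname{int}(A)\cap B=\emptyset$ follows from the definition of $M$; the vertical ray in $A$ forces $\alpha\ge 0$; the $\alpha=0$ exclusion via openness of $\{\Theta<+\infty\}$ near $z_0$ is correct; and after normalizing $\alpha=1$ the two one-sided bounds $\Theta^*(-z^*)\le -c$ and $\Xi^*(z^*)\le c-M$ (extended trivially to the points where $\Theta$ or $\Xi$ is $+\infty$) combine with weak duality to give equality and attainment. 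The degenerate case $M=-\infty$ is also handled correctly. Your proof is complete and correct.
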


\begin{theorem} [Sion's minimax theorem \cite{Sion58}] \label{th:minmax} 
Let $X$ be a  compact convex subset of a linear topological space, and  $Y$ a convex subset of a linear topological space. Let $f : X \times Y \to \R$ be a function satisfying 
\begin{itemize}
\item [(a)] $f(x, \cdot)$ is upper semicontinuous and quasi-concave on $Y$ for each $x \in X$; 
\item[(b)] $f(\cdot, y)$ is lower semicontinuous and quasi-convex on $X$ for each $y \in Y$. 
 Then 
$$\sup_{y \in Y} \min_{x \in X}f(x,y) = \min_{x \in X}\sup_{y \in Y} f(x,y).  $$
\end{itemize} 
\end{theorem}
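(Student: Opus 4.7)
The plan is to prove the nontrivial inequality $\min_x \sup_y f \le \sup_y \min_x f$ via a sublevel-set compactness argument; the opposite inequality is routine. For any $(x_0,y_0) \in X\times Y$, one has $\min_x f(x,y_0) \le f(x_0,y_0) \le \sup_y f(x_0,y)$, where the left-hand minimum is attained thanks to LSC of $f(\cdot,y_0)$ on the compact set $X$. Taking $\sup$ over $y_0$ on the left and $\inf$ over $x_0$ on the right, and noting that $x \mapsto \sup_y f(x,y)$ is itself LSC as a supremum of LSC functions (hence attains its minimum on the compact $X$), we obtain $\sup_y \min_x f \le \min_x \sup_y f$.

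For the reverse inequality, I would argue by contradiction: assume $\sup_y \min_x f(x,y) < \alpha < \min_x \sup_y f(x,y)$ for some real $\alpha$. For each $y \in Y$ define the sublevel set
\[ A(y) := \{ x \in X : f(x,y) \le \alpha\}. \]
Each $A(y)$ is closed (by LSC of $f(\cdot,y)$), convex (by quasi-convexity in $x$), and nonempty (since $\min_x f(x,y) \le \sup_y \min_x f < \alpha$). If the family $\{A(y)\}_{y\in Y}$ has the finite intersection property, compactness of $X$ forces $\bigcap_{y\in Y} A(y) \ne \emptyset$, yielding an $x^* \in X$ with $\sup_y f(x^*,y) \le \alpha$, contradicting $\alpha < \min_x \sup_y f$. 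Thus it suffices to establish FIP, and a standard induction on finite subfamilies reduces this to the two-point case.

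The crux is therefore showing $A(y_1) \cap A(y_2) \ne \emptyset$ for any $y_1, y_2 \in Y$. My plan is to argue by contradiction and exploit the segment $\{y_t := (1-t)y_1 + t y_2 : t \in [0,1]\}$, which lies in $Y$ by convexity. Assuming $A(y_1)$ and $A(y_2)$ are disjoint, pick $\beta \in (\alpha, \min_x \sup_y f)$ and consider the slightly thicker closed convex nonempty slices $A_\beta(y_t) := \{x : f(x,y_t)\le \beta\}$. Quasi-concavity of $f(x,\cdot)$ implies that for each $x$ the map $t \mapsto f(x,y_t)$ is quasi-concave on $[0,1]$, so $\{t : f(x,y_t) > \gamma\}$ is a subinterval of $[0,1]$ for every threshold $\gamma$. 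Tracking how the slices $A(y_t)$ deform as $t$ ranges over $[0,1]$, one combines a connectedness argument on the interval with upper semicontinuity of $f(x_0,\cdot)$ at carefully chosen test points $x_0 \in A_\beta(y_t)$ to force a witness that lies simultaneously in $A(y_1)$ and $A(y_2)$, contradicting disjointness.

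The main obstacle is this two-point lemma, in which all four hypotheses must engage simultaneously: quasi-convexity in $x$ supplies convex $A(y_t)$, quasi-concavity in $y$ controls how $A(y_t)$ moves with $t$, LSC in $x$ keeps $A(y_t)$ closed for the compactness step, and USC in $y$ is what permits passing to limits along the parameter $t$ in order to secure the final contradiction. Once the two-point lemma is in hand, the passage to full FIP is formal finite induction, and the compactness of $X$ then closes the argument.
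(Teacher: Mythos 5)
This theorem is cited in the paper directly from \cite{Sion58}; the paper supplies no proof of it, so there is no in-paper argument to compare against, and your attempt must be judged on its own terms. Your easy direction is fine, your reduction to the finite-intersection property is the right frame, and your two-point sketch is essentially Komiya's elementary argument: disjointness of $A(y_1),A(y_2)$ is upgraded to disjointness of $\beta$-sublevel slices $C(y_1),C(y_2)$, quasi-concavity in $y$ traps each convex slice $C(y_t)$ inside $C(y_1)\cup C(y_2)$, and the resulting dichotomy on $t\in[0,1]$ is broken via upper semicontinuity at a test point. Two local slips worth flagging: $\beta$ must be chosen strictly below $\min_x\max(f(x,y_1),f(x,y_2))$ rather than merely below $\min_x\sup_y f$, or else $C(y_1)\cap C(y_2)$ need not be empty; and the test point $x_0$ should be drawn from the tighter $\alpha$-slice $A(y_{t_0})$ (not from $A_\beta$), so that $f(x_0,y_{t_0})\le\alpha<\beta$ and upper semicontinuity yields the needed \emph{strict} inequality $f(x_0,y_t)<\beta$ for nearby $t$.

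The genuine gap is the claim that a formal finite induction carries the two-point lemma to the finite-intersection property. Pairwise nonempty intersection of closed convex sets does not imply FIP, even in the plane: inside $[0,1]^2$ take $A=\{x+y\le 1/2\}$, $B=\{x\ge 1/2\}$, $C=\{y\ge 1/2\}$; every pair meets but the triple intersection is empty. The correct passage from two to $n$ points (Komiya's Lemma~2) is \emph{not} an induction on pairwise intersections. Given $y_1,\dots,y_n$ with $\alpha<\min_{x\in X}\max_{i\le n}f(x,y_i)$, one restricts to the compact convex set $X':=\{x\in X: f(x,y_n)\le\alpha\}$, verifies that all hypotheses persist on $X'\times Y$ and that $\alpha<\min_{x\in X'}\max_{i<n}f(x,y_i)$, applies the $(n-1)$-case on $X'$ to manufacture a \emph{new} point $y'\in Y$ satisfying $\alpha<\min_{x\in X'}f(x,y')$, and only then invokes the two-point lemma on the original $X$ with the pair $(y',y_n)$. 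Without this shrinking-domain step that produces a fresh witness $y'$ at each stage, the reduction does not close, and this is precisely where your proposal is incomplete.
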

The rest of the proof follows a similar strategy of the proof of Kantorovich duality as in \cite{V1}. Here, our assumption that $U$ is bounded, thus its closure $\bar U$ is compact, gives a simplification: it can be relaxed but for simplicity of the paper we do not follow it. 

Let 
$ E = C (\bar U)$ be the space of all continuous functions on $\bar U$, equipped with the sup-norm $\| \cdot \|_\infty$. 
The topological dual of $E$ is the space of Radon measures on $\bar U$,  that is, $ E^* = M(\bar U)$, normed by the total variation.
We now define
 \begin{align*}
 \Theta : \phi \in C (\bar U) \mapsto
 - \int \phi (y) df (y)  + \int (\phi-u)^{sp} (x) d\mu(x)
\end{align*}
and
\begin{align*}
 \Xi : \phi \in C (\bar U) \mapsto
 \begin{cases}
    0  & \text{if $\phi \le 0$ }, \\
   +\infty   & \text{otherwise}.
\end{cases}
 \end{align*}

Let us verify the conditions of $\Theta$ and $\Xi$ for the Fenchel-Rockafellar theorem.  
The function $\Xi$ is obviously convex. For $\Theta$, notice that 
the function $\phi \mapsto (\phi-u)^{sp} = \sup_{\tau \le \tau^U} \E \left[  (\phi -u)(W^x_\tau )\right]$ is convex,  which implies $\Theta$ is convex on $C (\bar U)$.
On the other hand, consider $z_0 \equiv 0$, then, $\Theta$ is continuous at $z_0$  (with respect to sup-norm $\| \cdot\|_\infty$) and $\Theta(z_0), \Xi(z_0) <\infty$. Therefore, we can apply \eqref{eqn:F-R}.
Notice that 
\begin{align*}
 \inf_{\psi \in C (\bar U)} \left[ \Theta (\psi) + \Xi(\psi) \right]
&  = - \left\{ \sup_{\phi \in C (\bar U) \  \& \ \phi \le 0} \left[ \int \phi (y) df (y)   -\int (\phi-u)^{sp} (x) d\mu(x)\right]\right\} \\
& = - \tilde D(\mu, f, u).
\end{align*}

Next, let us consider the Legendre-Fenchel transforms $\Theta^*$, $\Xi^*$  of $\Theta$ and $\Xi$, respectively.

For each $\nu \in M(\bar U)$, we have
\begin{align*}
 \Xi ^* (\nu) & = \sup_{\phi \in C (\bar U)} \left\{  \int \phi (x)d\nu(x)- \Xi (\phi) \right\}\\
 & =\sup_{\phi \in C (\bar U) \  \& \  \phi \le 0} \left\{ \int \phi (x)d\nu(x)\right\}\\
 & = \begin{cases}
   0   & \text{if $\nu \ge 0$}, \\
   +\infty   & \text{otherwise}.
\end{cases}
\end{align*}
On the other hand,
\begin{align*}
 \Theta^* (-\nu) &=\sup_{\phi \in C (\bar U)} \left\{  -\int \phi (x)d\nu(x)- \Theta (\phi) \right\}\\
 & = \sup_{\phi \in C (\bar U)} \left\{  - \int \phi (x)d\nu(x)     + \int \phi (y) df (y) -  \int (\phi-u)^{sp} (x) d\mu(x) \right\}.
 \end{align*}
 Notice that 
 \begin{align*}
 \int (\phi-u)^{sp} (x) d\mu(x) =  \sup_{\tau \le \tau^U \ \& \  W_0\sim \mu \ \& \ W_\tau \sim \tilde \nu}\left[ \int( \phi (y) - u(y)) d\tilde \nu (y)\right].
 \end{align*}
 Therefore, 
 \begin{align}\label{eqn:Theta-inter}
  \Theta^*(-\nu) &= \sup_{\phi \in C (\bar U)} \left\{  \inf_{\tau \le \tau^U \ \& \  W_0\sim \mu \ \& \ W_\tau \sim \tilde \nu}\left[ -\int \phi (y) (d\nu +d\tilde \nu - df ) +\int  u(y) d\tilde \nu (y)\right] \right\}.
\end{align}

Here, the function $(\phi, \tilde{\nu}) \mapsto  -\int \phi (y) (d\nu +d\tilde \nu - df ) +\int  u(y) d\tilde \nu (y) $  is linear in $\phi$ and $\tilde{\nu}$, and  the set $\{\tilde{\nu} \ |  \ \tau \le \tau^U \ \& \ W_0\sim \mu  \ \& \ W_\tau \sim \tilde \nu \}$ 
  is compact with respect to the weak*-topology as a closed set of probability measures on the compact set $\bar U$. (Notice that the set of randomized stopping times, as  probability measures, is endowed with the weak*-topology, that is,  the weak* topology is its original topology.)

 Hence the infimum is attained at some $\tilde{\nu}$ for a given $\phi$, and we can apply Theorem~\ref{th:minmax} to get
\begin{align*}
 \Theta^* (-\nu) &= \sup_{\phi \in C (\bar U)} \left\{  \inf_{\tau \le \tau^U \ \& \  W_0\sim \mu \ \& \ W_\tau \sim \tilde \nu}\left[ -\int \phi (y) (d\nu +d\tilde \nu - df ) +\int  u(y) d\tilde \nu (y)\right] \right\}
 \\ & = \sup_{\phi \in C (\bar U)} \left\{  \min_{\tau \le \tau^U \ \& \  W_0\sim \mu \ \& \ W_\tau \sim \tilde \nu}\left[ -\int \phi (y) (d\nu +d\tilde \nu - df ) +\int  u(y) d\tilde \nu (y)\right] \right\}
\\ & = \min_{\tau \le \tau^U \ \& \  W_0\sim \mu \ \& \ W_\tau \sim \tilde \nu} \left\{ \sup_{\phi \in C (\bar U)} \left[ -\int \phi (y) (d\nu +d\tilde \nu - df ) +\int  u(y) d\tilde \nu (y)\right] \right\}.
 \end{align*}
Observe that $\int u(y) d\tilde \nu$ is bounded by $\|u\|_\infty  \tilde \nu (\bar U)=\|u\|_\infty \mu (U)$. Therefore, 
\begin{align*}
  \Theta^*(-\nu) \ge \min_{\tau \le \tau^U \ \& \  W_0\sim \mu \ \& \ W_\tau \sim \tilde \nu} \left\{ \sup_{\phi \in C (\bar U)}\left[ -\int \phi (y) (d\nu +d\tilde \nu - df ) \right] \right\} - \|u\|_\infty \mu (U).
\end{align*}

 Given $\nu$,  suppose that
$\tilde{\nu} + \nu \neq f$ on a set with positive measure. Then for some $\phi\in C (\bar U)$ we have
   $$ - \int \phi (y) (d\nu +d\tilde \nu - df )  \not =0.$$ 
 Then, by multiplying $\phi$ by a constant $\lambda$ (with the opposite sign if necessary),   we can make $$ -\int \lambda \phi (y) (d\nu +d\tilde \nu - df )  \longrightarrow \infty \quad \hbox{ as $|\lambda| \to \infty$.}
 $$
 Therefore, the above supremum in \eqref{eqn:Theta-inter} becomes
\begin{align*}
\Theta^*(-\nu) 
 = \min_{\tau\le \tau^U \ \& \, W_0\sim \mu \  \& \, W_\tau \sim \tilde \nu \hbox{ and } \tilde \nu + \nu =f}   \int u (y) d\tilde \nu (y)
\end{align*}
where it is understood that this minimum is $+\infty$ if there is no such $\tilde \nu$ with the condition ``$\tau\le \tau^U \ \& \, W_0\sim \mu\  \& \, W_\tau \sim \tilde \nu \hbox{ and } \tilde \nu + \nu =f$.''
Then together with $\Xi^*$, this implies 
\begin{align*}
& \max_{z^* \in E^*} \left[ - \Theta^* ( -z^*) - \Xi^*(z^*)\right]\\
& = - \min \left\{ \int u(y) d\tilde \nu (y)  \,\, : 
 \hbox{ there exist $\tilde \nu \le f$, $\tau$  
  such that  $\tau \le \tau^U \ \& \  W_0\sim \mu \ \& \ W_\tau \sim \tilde \nu$} \right\}\\
&= - P(\mu, f, u).
\end{align*}

This completes the proof of Theorem~\ref{th:dual}. 

\section{Attainment of the dual problem}\label{sec:dual-attainment}
In this section, we prove the dual attainment using the following lemma and proposition for superharnomic functions.

\begin{lemma}[Local Maximum Principle (c.f. Theorem 9.20 in \cite{GT-book})]\label{lem:localMax}
Let $U \subset \R^d$ be a  bounded open set.  
Let $\psi:U\to \R$ satisfy $\Delta \psi \le 0$ and $\psi \le 0$. Then for any ball $B_r$ such that $2B_r \subset U$ and for any $p >0$, there exists a constant $C = C(p, r, U)$ such that 
\begin{align*}
 \sup_{B_r} |\psi| \le C \frac{1}{|2B_{r}|}\left( \int_{2B_r} |\psi|^p \right)^{1/p}
\end{align*}
\end{lemma}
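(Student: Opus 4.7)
The plan is to recognize the lemma as an immediate consequence of the classical local maximum principle for non-negative subharmonic functions, and to reduce to it by a sign flip. Setting $v := -\psi$, the hypotheses $\psi \le 0$ and $\Delta \psi \le 0$ (the latter interpreted distributionally, which is the appropriate reading for superharmonic functions in the mean-value formulation of Definition~\ref{def:conventional-superharmonic}) translate to $v \ge 0$ and $\Delta v \ge 0$ on $U$. Thus $v$ is a non-negative, weakly subharmonic function on $U$, and the desired estimate $\sup_{B_r} |\psi| \lesssim \|v\|_{L^p(2B_r)}$ becomes exactly the standard local boundedness estimate for such $v$.

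The substantive step is then to apply \cite[Theorem 9.20]{GT-book} to $v$ on the ball $2B_r \subset U$, which yields
\begin{equation*}
  \sup_{B_r} v \;\le\; C(p,r,U)\left(\frac{1}{|2B_{r}|}\int_{2B_r} v^p\right)^{1/p},
\end{equation*}
with $C$ depending on $p$, $r$, and the geometry of $U$ through the Moser iteration that drives that theorem. (I read the normalization factor $1/|2B_r|$ in the statement as sitting inside the $p$-th root, which is the only placement consistent with the scale invariance of the inequality and with the convention of \cite{GT-book}.) Since $|\psi| = v$, this is exactly the claim.

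No real obstacle is expected: the lemma is essentially a repackaging of a textbook elliptic regularity estimate into a form convenient for the compactness argument pursued later in Section~\ref{sec:dual-attainment}. The only bookkeeping point worth flagging is the passage from mean-value / LSC superharmonicity (as introduced in Section~\ref{sec:prelim}) to distributional superharmonicity, which is classical and does not require a fresh argument. Correspondingly, if one wished to make the proof self-contained one would simply execute the Moser iteration (Caccioppoli on $v^q$, Sobolev embedding to gain a factor of $\chi = d/(d-2)$ in integrability per step, and — for $p \in (0,1)$ — a Young's inequality absorbing step on a slightly larger concentric ball), but the citation of \cite[Theorem 9.20]{GT-book} obviates this.
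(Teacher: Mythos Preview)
Your proposal is correct and matches the paper's own proof, which simply states that the lemma is a special case of \cite[Theorem~9.20]{GT-book}. The sign flip $v=-\psi$ you spell out is the only thing to do, and the paper does not even bother to record it.
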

\begin{proof}
 This is a special case of Theorem 9.20 of  \cite{GT-book}.
\end{proof}
Lemma~\ref{lem:localMax} also holds for LSC nonsmooth superharmonic functions due to the standard mollifier process: we can approximate $\phi$ by smooth superharmonic functions, using a convolution with a smooth mollifier. This lemma then provides the following proposition.

\begin{proposition}\label{prop:limit-sp}
 
Let $U \subset \R^d$ be a  bounded open set.  
Let $\psi$ and $\psi_j$ be  measurable LSC functions on $U$ such that 

\begin{itemize}
 \item 
$ \psi_j \le 0$ in $U$;
\item  $\psi_j$’s are superharmonic   in $U$;
\item $ \int_U |\psi_i| dx \le C $ for some universal constant $C$ (independent of $j$);
\item   $\psi_j \to \psi$ as $j\to \infty,$ for a.e. $x \in U$.

\end{itemize}
Then, $\psi$ is superharmonic in $U$.
\end{proposition}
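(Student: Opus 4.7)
The plan is to obtain uniform local bounds from Lemma~\ref{lem:localMax}, pass to the limit in the super mean value inequality, and finally upgrade the resulting almost-everywhere statement to a bona fide lower semicontinuous superharmonic representative.

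First I would apply Lemma~\ref{lem:localMax} with $p=1$ (using the remark after that lemma, which extends it to LSC, non-smooth superharmonic functions via mollification). Combined with the hypothesis $\psi_j\le 0$ and the uniform bound $\int_U|\psi_j|\,dx\le C$, this yields, for each ball $B_r$ with $\overline{2B_r}\subset U$, a uniform estimate
\[
\sup_j\,\sup_{B_r}|\psi_j|\;\le\;C'(r,U).
\]
Hence $\{\psi_j\}$ is locally uniformly bounded on $U$. Together with the pointwise a.e.\ convergence $\psi_j\to\psi$, the dominated convergence theorem gives $\psi_j\to\psi$ in $L^1_{\text{loc}}(U)$, and in particular $\tfrac{1}{|B|}\int_B\psi_j\to \tfrac{1}{|B|}\int_B\psi$ for every ball $B$ with $\overline{B}\subset U$.

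Second, each $\psi_j$ satisfies the super mean value inequality $\psi_j(x)\ge\tfrac{1}{|B|}\int_B\psi_j$ for every ball $B\subset U$ centered at $x$. Fixing such an $x$ in the full-measure set where $\psi_j(x)\to\psi(x)$ and passing to the limit in $j$, I obtain $\psi(x)\ge\tfrac{1}{|B|}\int_B\psi$ for a.e.\ $x\in U$ and every admissible $B$.

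The main obstacle, and the only real subtlety, is to promote this a.e.\ inequality to one holding at every $x\in U$, together with lower semicontinuity. My plan is to work with the precise representative
\[
\psi^*(x)\;:=\;\lim_{r\to 0^+}\frac{1}{|B_r(x)|}\int_{B_r(x)}\psi(y)\,dy,
\]
whose existence follows from the monotonicity of $r\mapsto \tfrac{1}{|B_r(x)|}\int_{B_r(x)}\psi$ implied by the super mean value inequality just established. Lebesgue differentiation yields $\psi^*=\psi$ almost everywhere, while a standard application of Fatou's lemma to the monotone limit shows that $\psi^*$ is lower semicontinuous and inherits the super mean value property at every $x\in U$, thus meeting Definition~\ref{def:conventional-superharmonic}. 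Equivalently, one may package this distributionally: $L^1_{\text{loc}}$-convergence passes the condition $-\Delta\psi_j\ge 0$ in $\mathcal{D}'(U)$ to the limit, and standard potential theory then furnishes the unique LSC superharmonic representative. Either route shows that, after identifying $\psi$ with this representative, $\psi$ is superharmonic, completing the proof.
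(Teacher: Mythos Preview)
Your argument is correct and follows essentially the paper's route. Both proofs apply Lemma~\ref{lem:localMax} with $p=1$ to obtain uniform local bounds on the $\psi_j$, then pass to the limit in the super-mean-value inequality (the paper uses Fatou's lemma, you use dominated convergence---either works once the local bounds are in hand). The only real difference is your last step: you overlooked that $\psi$ is \emph{assumed} LSC in the hypotheses of the proposition. The paper therefore simply invokes this hypothesis to upgrade the a.e.\ super-mean-value inequality to one holding at every point, whereas you build an LSC superharmonic representative $\psi^*$ from scratch and then identify $\psi$ with it. Your detour is not wrong---it is arguably more explicit about which representative is in play---but it is longer than what the paper does, and the phrase ``together with lower semicontinuity'' in your third paragraph suggests you did not notice that LSC was already given.
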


\begin{proof}
 From Lemma~\ref{lem:localMax} applied with $p=1$ we have 
that  in every compact ball in $U$ there is a uniform lower bound on $\psi_j$'s; the lower bound may depend on the compact ball, but not on $j$.
Therefore we can apply Fatou's lemma to show the desired  property of  Definiton~\ref{def:conventional-superharmonic}
 for $\psi$ for a.e. $x \in U$. Then since $\psi$ is LSC, the  property of Definition~\ref{def:conventional-superharmonic} holds for every $x \in U$. 
 \end{proof}

We now prove the dual attainment, using the above results. 
\begin{theorem}[Dual attainment]\label{th:dual-attain}

 For $\mu$, $f$, $u$ given as in Section~\ref{sec:Assumptions and notation}, let $\mu$ satisfy \eqref{eqn:SHmu}. 
Then, the dual problem $D(\mu, f, u) $ is  attained with an optimal solution $( \phi^*, \psi^*) \in \mathcal{D}$. 
\end{theorem}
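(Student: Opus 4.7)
The plan is to take a maximizing sequence $\{(\phi_n,\psi_n)\}\subset\mathcal{D}$ for $D(\mu,f,u)$, establish uniform estimates, extract a subsequential limit $(\phi^*,\psi^*)$, and verify that it is admissible and realizes the supremum. I would first normalize each $\psi_n$ by replacing it with $\min(\psi_n,0)$: since $u\ge 0$ and $\phi_n\le 0$ give $\phi_n-u\le 0$, the new $\psi_n$ remains superharmonic (minimum of superharmonics is superharmonic) and still satisfies $\psi_n\ge \phi_n-u$, while only improving the objective. One may alternatively take $\psi_n=(\phi_n-u)^{sp}$, which is automatically nonpositive here.

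The key estimate comes from the hypothesis \eqref{eqn:SHmu}. Let $\tilde\nu$ be the measure provided by \eqref{eqn:SHmu}, so $\tilde\nu\le\delta f\chi_U$ and $\mu\le_{SH}\tilde\nu$ via a stopping time $\tilde\tau\le\tau^U$. Applying Lemma~\ref{lem:stopping-lemmas} to the superharmonic $\psi_n$ gives $\int\psi_n\,d\mu\ge\int\psi_n\,d\tilde\nu\ge\int(\phi_n-u)\,d\tilde\nu$, whence
\begin{align*}
 \int\phi_n\,df-\int\psi_n\,d\mu
 \;\le\;\int\phi_n\,d(f-\tilde\nu)+\int u\,d\tilde\nu
 \;\le\;(1-\delta)\int\phi_n f\,dx+\int u\,d\tilde\nu,
\end{align*}
using $\phi_n\le 0$ together with $f-\tilde\nu\ge(1-\delta)f$ on $U$ (which flips the sign since $\phi_n\le 0$). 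Since the left-hand side tends to the finite number $D(\mu,f,u)$ (Theorem~\ref{th:dual}), and $\int\phi_n f\,dx\le 0$, this yields a uniform bound on $\int|\phi_n|f\,dx$; the pointwise inequality $|\psi_n|\le|\phi_n|+u$ then gives the analogous bound on $\int|\psi_n|f\,dx$.

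Because $f>0$ is bounded below by a positive constant on every compact $K\Subset U$, the $L^1(f\,dx)$ bound becomes a uniform $L^1(K,dx)$ bound on the nonpositive superharmonic functions $\psi_n$. Applying Lemma~\ref{lem:localMax} with $p=1$ upgrades this to a uniform $L^\infty$-bound on every compact subset of $U$. A diagonal argument then extracts a subsequence with $\psi_n\to\psi^*$ a.e.\ in $U$, and Proposition~\ref{prop:limit-sp} guarantees that $\psi^*$ is superharmonic (and $\le 0$). From $|\phi_n|\le|\psi_n|+u$ the same procedure yields a subsequential limit $\phi_n\to\phi^*\le 0$ a.e., and the pointwise inequality $\psi^*\ge\phi^*-u$ passes to the limit, so $(\phi^*,\psi^*)\in\mathcal{D}$.

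It remains to show that the limit realizes the supremum. Fatou's lemma applied to the nonpositive $\phi_n$ gives $\int\phi^*\,df\ge\limsup_n\int\phi_n\,df$. For the $\psi_n$ term, I would exhaust $U$ by compact subsets $K_j\Subset U$: on each $K_j$, the uniform $L^\infty$ bound and dominated convergence produce $\int_{K_j}\psi_n\,d\mu\to\int_{K_j}\psi^*\,d\mu$, while the boundary contribution $\int_{U\setminus K_j}|\psi_n|\,d\mu$ is controlled via the bounded density of $\mu$ against the uniform $L^1(f\,dx)$ estimate already derived. Combining yields
\begin{align*}
 D(\mu,f,u)=\lim_n\Bigl(\int\phi_n\,df-\int\psi_n\,d\mu\Bigr)\le\int\phi^*\,df-\int\psi^*\,d\mu,
\end{align*}
and the reverse inequality follows from admissibility of $(\phi^*,\psi^*)$. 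I expect the main obstacle to be precisely this last tail/equi-integrability estimate for $\psi_n$ near $\partial U$: the local maximum principle provides no information there, and this is exactly where the strict inequality $\delta<1$ in \eqref{eqn:SHmu} is essential, since it furnishes the quantitative margin that converts the no-duality-gap identity $D=P$ into the compactness needed to pass to the limit.
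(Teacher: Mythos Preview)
Your uniform $L^1$ estimate via \eqref{eqn:SHmu} is correct and matches the paper's Step~2. The difficulties lie in the extraction of a limit and in passing to the limit in the objective.

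\textbf{Extraction of an a.e.\ limit.} Local $L^\infty$ bounds alone do not produce a.e.\ convergent subsequences; the oscillatory sequence $\sin(nx)$ is a standard counterexample. For $\psi_n$ one could try to rescue this by invoking compactness specific to superharmonic functions, but your proposed treatment of $\phi_n$ fails outright: $\phi_n$ is \emph{not} superharmonic, so Lemma~\ref{lem:localMax} does not apply, and the pointwise bound $|\phi_n|\le|\psi_n|+u$ only transfers an $L^\infty_{loc}$ bound, which again gives no compactness. The paper circumvents this by invoking Koml\'os' theorem: any $L^1$-bounded sequence has a subsequence whose Ces\`aro averages converge a.e. Since averages of superharmonic functions remain superharmonic, one applies Koml\'os to $\phi_i$ and then to the gap $\hat\phi_i:=(\phi_i-u)^{sp}-(\phi_i-u)$ (which is bounded by $u$ after a normalization of $\phi_i$), obtaining averaged sequences $\phi^*_n\to\phi^*$ and $\tilde\phi^*_n\to\tilde\phi^*$ a.e.; the sum $\psi^*:=\tilde\phi^*+(\phi^*-u)$ is then superharmonic by Proposition~\ref{prop:limit-sp}.

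\textbf{Passing to the limit.} Your proposed control of the tail $\int_{U\setminus K_j}|\psi_n|\,d\mu$ would require equi-integrability of $\{\psi_n\}$, which a uniform $L^1$ bound does not provide; you correctly flag this as an obstacle but do not resolve it. The paper avoids equi-integrability entirely: it rewrites the objective as
\[
\int u\,df+\int(\phi^*_k-u)(df-d\nu)+\int\Bigl(\mathbb{E}[(\phi^*_k-u)(W^x_\tau)]-(\phi^*_k-u)^{sp}(x)\Bigr)\,d\mu(x),
\]
with $\nu$ and $\tau$ coming from \eqref{eqn:SHmu}. Each bracketed integrand has a definite sign ($\phi^*_k-u\le0$, $f-\nu\ge0$, and the stopping-time term is $\le0$ by definition of the superharmonic envelope), so Fatou's lemma applies termwise and yields $D(\mu,f,u)\le\int\phi^*df-\int\psi^*d\mu$ without any tail estimate. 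This rewriting, not an equi-integrability bound, is where the auxiliary measure $\nu$ is used a second time.
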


\begin{proof}  We prove the theorem in five steps.  Recall $D(\mu, f,u) = D'(\mu, f, u) = P(\mu, f, u)$.

\medskip

{\bf Step 1. }
Let $\{(\phi_i, \psi_i)\} \subset \mathcal{D}'$ be a maximizing sequence for $D'(\mu, f, u)$, in particular, $\phi_i \le 0$.
Because $\tilde D(\mu, f, u) = D'(\mu, f, u)$  from  the proof of Theorem~\ref{sec:dual-proof},   we may assume that $\phi_i \in C(\bar U)$ and $\psi_i = (\phi_i - u)^{sp}$.

Now we increase $\phi_i\le 0$ point-wise as much as possible without changing $\psi_i$,  which is favored for the maximization of the functional $\phi \to \int \phi df - \int \psi d\mu$. Then since $\psi_i = (\phi_i-u)^{sp}$ is the smallest superharmonic function $\ge \phi_i-u$, the largest $\phi_i$ (holding  the same  $\psi_i$)  satisfies
\begin{itemize} 
 \item[(a)]  $\phi_i(x) =0$ (where $0$ is the upper bound of $\phi_i$), if $(\phi_i -u)^{sp} (x) > -u (x)$;
  \item[(b)]  $(\phi_i -u) (x)= (\phi_i - u)^{sp}(x)$,  otherwise. 

\end{itemize}
Hence we may assume without loss of generality that
$$\phi_i- u = \min[ (\phi_i -u)^{sp}, -u].$$
From this we see that 
 \begin{align*}
 0 &\le (\phi_i - u)^{sp}- (\phi_i - u)\\
 &   = [(\phi_i - u)^{sp} - (-u)]\chi_{\{(\phi_i - u)^{sp} > -u\}} \\ 
 & \le [(\phi_i - u)^{sp} - (-u)]\chi_{\{\phi_i =0\}} \\ 
 &\le  u\chi_{\{\phi_i =0\}} \quad \\
 & \le u
\end{align*}
where the third inequality follows since  $(\phi_i -u )^{sp} \le 0$ (here recall that $(\phi_i -u )^{sp}$ is the smallest superharmonic function above  $\phi_i -u $, while $\phi_i -u \leq 0$).
For simplicity of notation, we define 
\begin{align}\label{eqn:phi-hat}
 \hat \phi_i := (\phi_i -u)^{sp} - ( \phi_i - u).
\end{align}
Then from the above estimate, we have
\begin{align*}
 0\le \hat \phi_i \le u \qquad \hbox{and} \quad \int |\hat \phi_i| \le C.
\end{align*}
\medskip

{\bf Step 2.} 
We will show that 
\begin{align}\label{eqn:phi-i-bound}
 \int| \phi_i | dx \le C' \hbox{ for all large } i 
\end{align}
where $C'$ is a constant independent of $i$ (but  dependent  on $\delta$ among others). 
To see this first note 
 $$ \int \phi_i df - \int (\phi_i -u)^{sp} d\mu  \rightarrow D'(\mu, f,  u) = D(\mu, f, u). $$
 Recall that by our assumption \eqref{eqn:SHmu} there is a measure $\nu$ with $\nu \leq \delta f \chi_U$, $0< \delta <1$, and a stopping time $\tau\le \tau_U$ such that  $W_0\sim\mu, W_\tau\sim\nu$. With this $\nu$ we observe that 
 \begin{align}\nonumber
  & \int \phi_i df - \int (\phi_i -u)^{sp} d\mu\\\nonumber &=
\int u df +   \int (\phi_i -u)df  - \int (\phi_i -u)^{sp} d\mu \\ \label{eqn:with-nu}
& = \int u df +   \int (\phi_i -u)(df-d\nu) + \int  (\phi_i -u)d\nu  - \int (\phi_i -u)^{sp} d\mu \\ \nonumber
& \le  \int u df +   \int (\phi_i -u)(df-d\nu) +\left[ \int  (\phi_i -u)^{sp}d\nu  - \int (\phi_i -u)^{sp} d\mu \right] \\ \nonumber
& \qquad \qquad  \hbox{ (since $(\phi_i -u)^{sp} \ge (\phi_i -u)$)} \\ \nonumber
&  \le  \int u df +   \int (\phi_i -u)(df-d\nu)  \hbox{ (from  \eqref{eqn:maximum-good})}\\ \nonumber
& =\int ud\nu+ \int \phi_i (df -d\nu)\\ \nonumber
& \le \int udf+ \int \phi_i (df -d\nu).
  \end{align}
Since $\int udf$ and $D(\mu, f, u)$ are finite,  $\phi_i \le 0$ and $f-\nu \ge 0$, it follows that
$\int | \phi_i|  (df -d\nu) \le C$ for a constant $C$ independent of $i$. Since $df -d\nu \ge (1-\delta) f dx$, this shows 
 \eqref{eqn:phi-i-bound} as desired. 

\medskip

 {\bf Step 3.}
From the above $L^1$ bounds for $\phi_i$ and $\hat \phi_i$ obtained in steps 1 and 2,  we will now construct a.e. convergent sequences $\phi^*_i $ and  $ \tilde{ \phi^*}_i$, where $\phi^*_i$ is a maximizing sequence of $D(\mu, f, u)$. Indeed, from Koml\'os Theorem~\ref{thm:A} and the bounds obtained in steps 1 and 2 we have a subsequence $\{ \phi'_k\} = \{ \phi_{i_k}\}$ and $\phi^*$ such that for every arbitrary  subsequence $\{ \phi'_{k_n} \}$ if $n\to \infty$ then 
\begin{align*}
 \frac{1}{n} (\phi'_{k_1}+ ... + \phi'_{k_n}) \to \phi^* \hbox{ a.e.} 
\end{align*}
Then apply Koml\'os theorem  to  $\hat{\phi'}_k$ (defined as  \eqref{eqn:phi-hat}) to get a subsequence $\hat{\phi'}_{k_j}$ and $\tilde \phi^*$ having the above convergence property to $\tilde \phi^*$ for any subsequences.
Now, use the index of  $\hat{\phi'}_{k_j}$ to both $\phi'_{k_j} $ and $\hat{\phi'}_{k_j}$, to define 
\begin{align*}
\phi^*_n :=\frac{1}{n} \sum_{j=1}^{n} \phi'_{k_j}, \quad \tilde{\phi^*}_n :=\frac{1}{n} \sum_{j=1}^{n} \hat{\phi'}_{k_j} .
\end{align*}
Note that we have
$$\phi^*_n \to \phi^*, \quad  \tilde{ \phi^*}_n \to \tilde{\phi^*}
\quad \hbox{ a.e.}$$   Furthermore,  $\tilde{\phi^*}_n$ inherits the following estimates from $\hat{\phi}_i$: 
\begin{align*}
0\le \tilde{\phi^*}_n \le u \hbox{ for all $n$}  \quad \& \quad   \hbox{ $\int|\phi^*_n - u|  \le C'$ for all large   $n$}.
\end{align*}

It remains to show that $\phi_i^*$ is a maximizing sequence. To see this first observe  
\begin{align*}
 (\phi^*_n -u)^{sp}  =\left(  \frac{1}{n} \sum_{j=1}^n    \phi'_{k_j}    - u\right)^{sp} \, \le \,   \frac{1}{n} \sum_{j=1}^n   \left( \phi'_{k_j}    - u\right)^{sp}.
\end{align*}
Therefore, 
\begin{align*}
 & \int \phi^*_n df - \int (\phi^*_n -u)^{sp} d\mu\\
& \ge  \frac{1}{n} \sum_{j=1}^n\left[ \int \phi_{k_j} df - \int (\phi_{k_j} -u)^{sp} d\mu\right]\\
 & \to  D(\mu, f, u),
 \end{align*}
where the last convergence follows since $ \int \phi_{k_j} df - \int (\phi_{k_j} -u)^{sp} d\mu$ converges to $D(\mu, f, u)$ as $j\to \infty$.
This completes step 3.

\medskip

 {\bf Step 4.} 
 We now show that $\tilde{\phi^*} + (\phi^* -u)$ is superharmonic and $\tilde{\phi^*} + (\phi^* -u) \ge  (\phi^* -u)^{sp} $.
Indeed  \eqref{eqn:phi-hat}  yields
\begin{align*}
  \tilde{ \phi^*}_n + (\phi^*_n-u) = \frac{1}{n} \sum_{j=1}^n ( \phi'_{k_j}  -u +\hat{\phi'}_{k_j} )=  \frac{1}{n} \sum_{j=1}^n (\phi'_{k_j}-u)^{sp},
\end{align*}
where the latter is superharmonic, and has uniformly bounded $L^1$ integral on $U$.  This is because 
both $\hat{\phi_i}$ and  $\phi_i$ have uniform $L^1$ bounds. By Lemma~\ref{lem:LSC}, $(\phi_{k_j}-u)^{sp}$ is LSC for each $j$. Moreover, 
\begin{align*}
 \tilde{\phi^*}_n + (\phi^*_n -u)
 \rightarrow \tilde{ \phi^*} + (\phi^* -u)\quad  \hbox{ a.e.}
 \end{align*}
Therefore from Proposition~\ref{prop:limit-sp}, we see that 
$  \psi^* := \tilde{\phi^*} + (\phi^* -u)$
 is superharmonic in the sense of  Definition~\ref{def:conventional-superharmonic}.  Since $\tilde{ \phi^*} \ge 0$, we see 
$$ \psi^*  \ge (\phi^* -u).
$$ 

 Also, $ \phi^* \le 0$ as the limit of $\phi^* _k\le 0$, therefore, the pair $(\phi^*,  \psi^*) = ( \phi^*, \tilde{\phi^*} + (\phi^* -u)) \in \mathcal{D}$.

\medskip

{\bf Step 5.} 
 This is the final step where we verify optimality of $(\phi^*, \psi^*)$. Recall from Step 2 that $\tau$ satisfies $W_0\sim \mu$, $W_\tau \sim \nu$ and $\nu$ satisfies  $\nu\le \delta f\chi_U$.
Recall also from Step 3 that $\phi^*_k$ is a maximizing sequence  of $D(\mu, f, u)$, namely
\begin{align}\label{eqn:step5}
  D(\mu, f, u)
   &  =\lim_k \left[ \int \phi^*_k df - \int (\phi^*_k -u)^{sp} d\mu\right].
   \end{align}
The last expression inside of the limit can be written, similarly to \eqref{eqn:with-nu}, as
   \begin{align*}
\int u df +   \int (\phi^*_k -u)(df-d\nu) + \int  (\phi^*_k -u)d\nu  - \int (\phi^*_k -u)^{sp} d\mu. 
\end{align*}
We can write this with $\tau$ as 
$$
  \int u df +   \underbrace{\int (\phi^*_k -u)(df-d \nu) }_{\text{$:=I$}}+ \underbrace{\int  \left (\mathbb{E}[\phi^*_k (W^x_{\tau})-u(W^x_{ \tau})] - (\phi^*_k -u)^{sp}(x) \right )d\mu(x)}_{\text{$:=II$}}. 
 $$
Recall that 
$\phi^*_k -u \le 0$ and $f -\nu \ge0$ by construction,  therefore applying Fatou's lemma we get 
\begin{align*}
 \limsup_k I \le  \int (\phi^* -u)(df-d \nu).
\end{align*}
We now estimate the $\limsup$ of $II$.
Notice that $\tau \le \tau^U$, and thus \eqref{eq:sp} yields  
$$\mathbb{E}[(\phi^*_k (W^x_{\tau})-u(W^x_{ \tau})]  - (\phi^*_k -u)^{sp}(x) \le 0.$$ Thus due to Fatou's lemma
\begin{align*}
\limsup_k II  & \le \int  \limsup_k \Big( \mathbb{E}[(\phi^*_k (W^x_{\tau})-u(W^x_{\tau})]  - (\phi^*_k -u)^{sp}(x)
\Big) d\mu(x)\\
& \le \underbrace{\int  \limsup_k \Big( \mathbb{E}[(\phi^*_k (W^x_{\tau})-u(W^x_{\tau})]  d\mu(x)\Big) d\mu(x)}_{\text{$:=III$}} + \underbrace{\int (- \liminf_k (\phi^*_k -u)^{sp}(x) )d\mu(x)}_{\text{$:=IV$}}.
 \end{align*}
 
For $III$, since $\phi^*_k -u \le 0$, Fatou's lemma again yields
\begin{align*}
\limsup_k \mathbb{E}[(\phi^*_k (W^x_{\tau})-u(W^x_{\tau})]  \le \mathbb{E}\left[\limsup_k (\phi^*_k (W^x_{\tau})-u(W^x_{\tau}))\right] \hbox{ for each $x$}.
\end{align*}
Thus, 
\begin{align*}
 III & \le \int  \mathbb{E}\left[\limsup_k (\phi^*_k (W^x_{\tau})-u(W^x_{\tau}))\right]  d\mu(x) \\
 & = \int \limsup_k (\phi^*_k - u) d\nu  \quad \hbox{ (as $W_\tau \sim \nu$ and  $\limsup_k (\phi^*_k - u) \le 0$)}\\
 & = \int  (\phi^* - u) d\nu \quad \hbox{ (as $\phi^*_k \to \phi^*$ a.e.  and $\nu \ll Leb$). }
\end{align*}

For $IV$, $(\phi^*_k -u)^{sp} = ( \tilde{\phi^*}_k + \phi^*_k -u)$ (by the definition of $\tilde{\phi^*}_k$), thus, 
\begin{align*}
 \liminf_k \left( (\phi^*_k -u)^{sp}\right) = \liminf_k ( \tilde{\phi^*}_k + \phi^*_k -u).
\end{align*}
Since $\phi^*_k , \tilde \phi^*_k \to \phi^*, \tilde \phi^*$ a.e and $\mu \ll Leb$, it follows that
\begin{align*}
IV = - \int (\tilde \phi^* + \phi^* -u )d\mu.
\end{align*}

Above estimates for $III, IV$ yield
\begin{align*}
 \limsup_k II \le  \int  (\phi^* - u) d\nu - \int (\tilde \phi^* + \phi^* -u )d\mu.
\end{align*}
Finally going back to \eqref{eqn:step5},  above estimates on $I, II$ and the fact $\psi^* =  \tilde \phi^* + \phi^* -u$ yield
\begin{align*}
 D(\mu, f, u) & \le  \int u df +  \int (\phi^* -u)(df-d\nu)+
 \int ( \phi^*-u) d\nu  -\int \psi^* d\mu \\
 & = \int \phi^*df  -\int \psi^* d\mu .
 \end{align*}
 On the other hand, from Step 4 the pair $(\phi^*, \psi^* )$ is in  $\mathcal{D}$, and thus is admissible for $D(\mu, f, u)$.  Hence, 
the above line is less than or equal to $D(\mu, f, u)$  and we can conclude.
\end{proof}

\begin{remark}\label{rmk:delta}
The  proof of dual attainment (Theorem~\ref{th:dual-attain}), more precisely Step 2 therein, is the only place where we need $\delta$ in $(C_0)$ to be strictly less than $1$. 
\end{remark}

\section{Saturation property for the primal optimal solution}\label{sec:saturation}

We prove in this section that the optimal target measure $\nu^*$ of $P(\mu, f, u)$  saturates the density constraint wherever it is positive. 
To motivate the statement of the main result of this section (Theorem~\ref{th:saturation}) and also  to demonstrate why the optimality of $\nu^*$ is related to its saturation, we first give the following (easy) proposition before proving the main theorem: this proposition will not be used anywhere else in the paper.  
\begin{proposition}\label{prop:easy-sat}
Let $\nu^*$ be an optimal solution to $P(\mu, f, u)$. 
Then, for each $\epsilon >0$ there is no open set $V \subset \R^d$ such that 
\begin{align*}
0<  \nu^*(x) < f (x) -\epsilon \quad \hbox{ for a.e. $x \in V$.}
\end{align*}

\end{proposition}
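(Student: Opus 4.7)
The plan is a direct perturbation argument: assume toward contradiction that such $V$ and $\epsilon > 0$ exist, then build a small modification $\tilde \nu$ of $\nu^*$ that is still admissible but has strictly lower cost. Let $\tau^*$ be an optimal randomized stopping time realizing $\nu^*$ (so $W_0 \sim \mu$, $W_{\tau^*} \sim \nu^*$, $\tau^* \le \tau^U$). Since $V$ is open and $\nu^* > 0$ a.e.\ on $V$, pick $x_0 \in V$ and $r > 0$ with $\overline{B(x_0, 2r)} \subset V$ and set $B := B(x_0, r)$; then $\nu^*(B) > 0$, so the event $\Omega_B := \{W_{\tau^*} \in B\}$ has positive probability. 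The heuristic is that strict superharmonicity $\Delta u < 0$ makes extending the stopping time lower the cost, while the $\epsilon$-slack on $V$ provides room to re-place the mass.

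Fix small parameters $\alpha, T > 0$ and define $\tau'$ by: on $\Omega_B^c$, set $\tau' = \tau^*$; on $\Omega_B$, with independent probability $\alpha$ let the particle run an extra independent time $T_0 \sim \mathrm{Unif}[0, T]$, and set $\tau' := \tau^* + T_0$ provided $T_0 < \tau^{B(W_{\tau^*}, r)} - \tau^*$, and otherwise $\tau' := \tau^*$. This keeps $\tau^* \le \tau' \le \tau^U$ because any extra motion stays in $B(W_{\tau^*}, r) \subset B(x_0, 2r) \subset V \subset U$. Writing $W_{\tau'} \sim \tilde \nu$, the perturbation only moves mass inside $B(x_0, 2r) \subset V$, so $\tilde\nu = \nu^*$ outside $V$. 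The newly relocated mass on $B(x_0, 2r)$ is $\alpha\, K * \nu^*|_B$, where $K(z, x) = \tfrac{1}{T}\int_0^T p_t^{B(z, r)}(z, x)\, dt$ and $p_t^{B(z, r)}$ is the Brownian transition density killed on exiting $B(z, r)$. Dominating $K$ by the averaged Gaussian kernel $K_0(z - x) := \tfrac{1}{T}\int_0^T g_t(z - x)\, dt$, which satisfies $\int K_0(z - x)\, dz = 1$, a Young-type estimate yields $\|\alpha\, K * \nu^*|_B\|_\infty \le \alpha \|\nu^*|_B\|_\infty \le \alpha \|f\|_\infty$.

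Choosing $\alpha < \epsilon / \|f\|_\infty$, the bound $\tilde\nu \le \nu^* + \alpha \|f\|_\infty < (f - \epsilon) + \epsilon = f$ holds on $V$, and $\tilde\nu = \nu^* \le f$ elsewhere; thus $\tilde\nu$ is admissible for $P(\mu, f, u)$. On the other hand, Ito's formula applied with $\tau^* \le \tau'$ gives
\begin{align*}
\int u\, d\tilde\nu - \int u\, d\nu^* = \mathbb{E}\left[\int_{\tau^*}^{\tau'} \tfrac{1}{2} \Delta u(W_s)\, ds\right] < 0,
\end{align*}
since $\Delta u < 0$ on $U$, the event $\{\tau' > \tau^*\}$ has positive probability for $\alpha, T > 0$, and $W_s \in \overline{B(x_0, 2r)} \subset U$ on the extra interval. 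This contradicts the optimality of $\nu^*$. The main obstacle in executing the plan is the density control: one must localize the perturbation (via the exit time from $B(W_{\tau^*}, r)$) so that the relocated mass stays inside $V$, where the $\epsilon$-slack is available, and then bound its $L^\infty$ density by $\alpha \|f\|_\infty$ via the sub-Markov convolution estimate; together these allow $\tilde\nu$ to obey the constraint $\tilde\nu \le f$ while keeping the cost strictly below $\int u\, d\nu^*$.
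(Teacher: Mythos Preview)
Your proof is correct and follows essentially the same perturbation strategy as the paper: extend the optimal stopping time on a small portion of the mass landing inside a ball $B\subset V$, keep the extension local so the displaced mass stays within $V$ where the $\epsilon$-slack is available, and invoke strict superharmonicity via Ito's formula to obtain a strictly lower cost. The only difference is cosmetic---the paper appeals to the existence of a stopping time carrying $\chi_{B_r}$ to $\delta\chi_{B_{2r}}$ and scales it, whereas you build the extension explicitly via a uniform holding time truncated at the local exit time and control the resulting density by the killed-kernel/Gaussian convolution bound; both devices serve the same purpose and the arguments are otherwise identical (one minor point: you implicitly need $\overline{B(x_0,2r)}\subset U$, which follows by replacing $V$ with $V\cap U$ before choosing $x_0,r$).
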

\begin{proof}
 
 Let $\tau^*$ denote the optimal stopping time (via \cite{beiglboeck2017optimal, PDE19}) with $W_{\tau^*} \sim \nu^*$ and $W_0 \sim \mu$. 
 Suppose by contradiction that there is $\epsilon>0$ and an open set $V$ with $0< \nu^*(x) < f(x) -\epsilon$ for a.e. $x \in V$. 
Pick $x \in V$, and consider small $r>0$ such that $B_{2r}(x) \subset V$. Then, there exists a stopping time $\tau$ with initial distribution $\chi_{B_r(x)}$ and final distribution $\delta \chi_{B_{2r}(x)}$ for some dimensional constant $\delta>0$. 
Then because of the assumption on $V$, there exists $\delta_1 >0$, depending on $\epsilon$ and $r$,  such that the restriction of $\tau$, say, $\tau_1$, to the initial distribution $\delta_1 \nu^*|_{B_r(x)}$ has final distribution, say, $\nu_1$ on $B_{2r}(x)$, with $\nu_1 \le (f - \nu^*)|_{B_{2r}(x)}$. 
We extend the stopping time $\tau_1$ to be $0$ for the initial distribution $\nu^* - \delta_1 \nu^*|_{B_r(x)}$. 
In other words, 

\begin{align*}
\tau_1 :=
 \begin{cases}
  \tau_1  & \text{for Brownian motion starting from the distribution $\delta_1 \nu^*|_{B_{r}(x)}$ }, \\
   0   & \text{otherwise, i.e. for  Brownian motion starting from the distribution  $\nu^* - \delta_1 \nu^*|_{B_r(x)}$}.
\end{cases}
\end{align*}
Let $\nu_1^*$ denote the corresponding final distribution $W_{\tau_1} \sim \nu_1^*$  with $W_{0} \sim \nu^*$ .  Because of the construction we have $\nu^*_1 \le f$. 
Notice that for $\nu^*_1$, there is a  corresponding stopping time from the initial distribution $\mu$, obtained by concatenating the stopping time $\tau^*$ for $\nu^*$ and $\tau_1$. So $\nu^*_1$  is admissible for the problem $P(\mu, f, u)$ (see \eqref{eqn:primal-problem}). Also, notice that  $Prob[ \tau^*_1 >0] >0$.
Since $u$ is strictly superharmonic ($\Delta u <0$), this then implies 
\begin{align*}
 \int u d\nu^*_1 < \int u d\nu^*,
\end{align*}
which contradicts with the optimality of $\nu^*$ for $P(\mu, f, u)$. This completes the proof. 
\end{proof}

\begin{remark}\label{rmk:easy-saturation}
Proposition~\ref{prop:easy-sat} gives a  version of the saturation result on open sets. It therefore would give the main result below (Theorem~\ref{th:saturation}), assuming $f$ is continuous, if the positive set of $\nu^*$ were open a.e.,  which is not known to us. 
 We expect it to be false given that, formally, our target measure $\nu^*$ describes the shape of ice crystals generated by supercooling process.
\end{remark}

Before we present the main theorem of this section, let us state  the  following remarkable result proven in \cite{Frank-Lieb18}:
\begin{lemma}[Theorem 16 of \cite{Frank-Lieb18}]\label{lemm:no-flat}
Let $U \subset \R^d$ be open  and $\psi \in L^1_{loc}(U)$ be strictly subharmonic in the sense that $\int_E \Delta \psi >0$  for any measurable $E \subset U$ with $|E| >0$. Then $| \{ \psi =r\}| =0$ for any $r \in \R$.
(Here $\int_E \Delta \psi$ should be understood as $\mu_\psi (E)$ for the Borel measure $\mu_\psi$ which is generated by the nonnegative distribution $\Delta \psi$.)
\end{lemma}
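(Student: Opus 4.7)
The natural approach is by contradiction. Assume $|E|>0$ where $E:=\{\psi=r\}$ for some $r\in\R$. If I can show $\mu_\psi(E)=0$, then applying the hypothesis to this very $E$ yields $\mu_\psi(E)>0$, a contradiction. After translating, we may take $r=0$.

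The first step is a regularity observation: a subharmonic function whose distributional Laplacian is a Radon measure automatically belongs to $W^{1,p}_{loc}(U)$ for every $p<d/(d-1)$. The classical Stampacchia level-set lemma for Sobolev functions then gives $\nabla\psi=0$ almost everywhere on $E$.

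The heart of the argument is promoting this gradient vanishing to a statement about the Laplacian measure itself. The route I would take is mollification: setting $\psi_\epsilon:=\psi*\rho_\epsilon$ produces a smooth subharmonic approximation with $\nabla\psi_\epsilon\to\nabla\psi$ in $L^p_{loc}$ and $\Delta\psi_\epsilon\,dx\rightharpoonup d\mu_\psi$ as Radon measures. For a smooth subharmonic function, at a Lebesgue density point of a flat level set both the gradient and, because $\Delta\psi_\epsilon\ge 0$ prevents cancellation between positive and negative principal curvatures, the Hessian trace must vanish. A careful localization against a smooth cutoff concentrated on $E$ and passage to the limit as $\epsilon\to 0$ then forces $\mu_\psi(E)=0$. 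A cleaner but heavier alternative is potential-theoretic: off a polar (hence Lebesgue-null) exceptional set, every Lebesgue density point of $E$ lies in the fine interior of $E$, where fine quasi-continuity of $\psi$ at the value $0$ traps $\mu_\psi$ off the level set.

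The principal obstacle, as anticipated, lies in this second step. The low regularity of a generic subharmonic function (merely $W^{1,p}_{loc}$ for $p<d/(d-1)$) prevents one from directly differentiating $\nabla\psi=0$ a.e.\ to conclude that the Hessian, and hence its trace $\Delta\psi$, vanishes on $E$; one has to argue with the measure $\mu_\psi$ directly. Making this passage rigorous requires exactly the quantitative analysis carried out in Theorem 16 of \cite{Frank-Lieb18}, on which I would ultimately rely.
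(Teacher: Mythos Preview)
The paper does not supply a proof of this lemma at all: it is quoted verbatim as Theorem~16 of \cite{Frank-Lieb18} and used as a black box. So there is no ``paper's own proof'' to compare your proposal against; both you and the authors ultimately rest on the same external reference.

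That said, your sketch is a reasonable heuristic for why the result should hold, and you are candid that the crucial second step---passing from $\nabla\psi=0$ a.e.\ on the level set to $\mu_\psi(E)=0$---is where the real work lies and where you would invoke \cite{Frank-Lieb18}. One caution about the mollification route you outline: the level set $\{\psi_\epsilon=r\}$ need not coincide with, or even approximate well, the original level set $E=\{\psi=r\}$, so the observation that $\Delta\psi_\epsilon$ vanishes a.e.\ on \emph{its own} level sets does not localize cleanly to $E$ under the weak limit. The potential-theoretic alternative you mention (fine topology, polar exceptional sets) is closer in spirit to how such results are actually established, but again requires nontrivial input. In short: your proposal is an honest outline that correctly identifies the obstruction and correctly defers to the cited source, which is exactly what the paper itself does.
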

From this we immediately deduce the following proposition that we need:
\begin{proposition}\label{prop:the-same-set}
 Let $u_1, u_2: U \to \R$ be two functions such that $u_1$ is superharmonic as in \eqref{eqn:superharmonic} with $u_1 \in L^1_{loc}(U)$
and that  $u_2 \in C^2(U)$ and $\Delta u_2 >0$ in $U$. Then, 
\begin{align*}
 | \{  x \in U \ | \ u_1 (x)= u_2 (x) \} | =0.
\end{align*}
\end{proposition}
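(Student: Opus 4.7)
The plan is to apply Lemma~\ref{lemm:no-flat} to the difference $\psi := u_2 - u_1$ with $r = 0$. Once we verify that $\psi$ is strictly subharmonic in the sense required by that lemma, the conclusion $|\{\psi = 0\}| = |\{u_1 = u_2\}| = 0$ is immediate.

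First I would check local integrability: since $u_1 \in L^1_{loc}(U)$ by hypothesis and $u_2 \in C^2(U)$ is locally bounded, $\psi \in L^1_{loc}(U)$. Next, I want to show that the Borel measure $\mu_\psi$ associated with the distribution $\Delta \psi$ satisfies $\mu_\psi(E) > 0$ for every measurable $E \subset U$ with $|E| > 0$. The key observation is that, because $u_1$ is superharmonic in the sense of \eqref{eqn:superharmonic}, its distributional Laplacian is a nonpositive Radon measure. Indeed, applying \eqref{eqn:superharmonic} with the deterministic exit time $\tau = \tau^{B_\rho(x)}$ for small balls $B_\rho(x) \Subset U$ yields the classical spherical mean value inequality, and integrating in $\rho$ produces the ball mean value form of Definition~\ref{def:conventional-superharmonic}; standard potential theory then identifies $-\Delta u_1$ with a nonnegative Radon measure $\nu_1$ on $U$. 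Consequently, as distributions, $\Delta \psi = (\Delta u_2)\,dx + \nu_1$, so
\begin{align*}
 \mu_\psi(E) \;=\; \int_E \Delta u_2 \, dx \,+\, \nu_1(E) \;\geq\; \int_E \Delta u_2 \, dx \;>\; 0,
\end{align*}
where the strict positivity uses $\Delta u_2 > 0$ pointwise in $U$ together with $|E| > 0$.

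Having verified that $\psi$ is strictly subharmonic in the Frank--Lieb sense, I would invoke Lemma~\ref{lemm:no-flat} with $r = 0$ to conclude $|\{\psi = 0\}| = 0$, which is precisely the desired statement. I do not anticipate a serious obstacle: the proof is essentially a direct application of the quoted lemma, and the only mildly nontrivial step is the passage from the stopping-time formulation \eqref{eqn:superharmonic} of superharmonicity for $u_1$ to the standard distributional statement $\Delta u_1 \leq 0$, which is a routine ingredient of classical potential theory.
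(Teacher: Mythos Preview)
Your proposal is correct and follows exactly the paper's approach: the paper's proof is the one-line observation that $u_2 - u_1$ satisfies the hypothesis of Lemma~\ref{lemm:no-flat}, and you have simply spelled out in detail why that hypothesis holds (local integrability plus $\mu_\psi(E) \ge \int_E \Delta u_2\,dx > 0$). The only additional work you do---passing from the stopping-time form \eqref{eqn:superharmonic} back to $\Delta u_1 \le 0$ as a distribution---is routine and correctly handled.
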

\begin{proof}
Note that $u_2 - u_1$ satisfies the condition in Lemma~\ref{lemm:no-flat}. Therefore the result follows.
\end{proof}

We now prove the main theorem of this section, whose proof crucially uses the dual attainment (Theorem~\ref{th:dual-attain})   as well as Proposition~\ref{prop:the-same-set}.

\begin{theorem}[Saturation]\label{th:saturation}
 
Let $\mu$ be as given in Theorem~\ref{th:dual-attain}, let  $\nu^*$ be an optimal measure for $P(\mu, f, u)$ and $(\phi^*, \psi^*)$ an optimal solution to $D(\mu, f, u)$.
 Then
 $$\nu^* = f \chi_{\{ \phi^* <0\} }.$$

\end{theorem}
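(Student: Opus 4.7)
The plan is to combine the no-duality-gap identity $P(\mu,f,u)=D(\mu,f,u)$ from Theorem~\ref{th:dual} with Lemma~\ref{lem:stopping-lemmas} to collapse the chain of inequalities from the weak duality proof, thereby extracting two complementary slackness relations between $\nu^*$ and $(\phi^*,\psi^*)$. A final application of Proposition~\ref{prop:the-same-set}, powered by the strict superharmonicity $\Delta u<0$, will then eliminate the degenerate level set $\{\phi^*=0\}$ from the support of $\nu^*$.

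Let $\tau^*$ be an optimal stopping time with $W_0\sim\mu$, $W_{\tau^*}\sim\nu^*$. Since $\psi^*$ is superharmonic and $\nu^*\le f\chi_U$ has a bounded density, Lemma~\ref{lem:stopping-lemmas} gives $\int\psi^*\,d\mu\ge\int\psi^*\,d\nu^*$, while dual feasibility $\psi^*\ge\phi^*-u$ gives $\int\psi^*\,d\nu^*\ge\int(\phi^*-u)\,d\nu^*$. Substituting these into the strong duality identity $\int u\,d\nu^*=\int\phi^*\,df-\int\psi^*\,d\mu$ produces
\begin{align*}
0 \;\le\; \int \phi^*\,(f-\nu^*)\,dx,
\end{align*}
but the integrand is pointwise non-positive because $\phi^*\le 0$ and $\nu^*\le f\chi_U$. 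Hence the integral vanishes and both preceding inequalities must be equalities. We thus obtain two complementary slackness conditions: (i) $\phi^*(f-\nu^*)=0$ a.e., so $\nu^*=f$ a.e.\ on $\{\phi^*<0\}$; and (ii) $\psi^*=\phi^*-u$ holds $\nu^*$-a.e., since the non-negative integrand $\psi^*-\phi^*+u$ integrates to zero against $\nu^*$.

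The main obstacle is to rule out any contribution of $\{\phi^*=0\}$ to $\nu^*$, and this is where the strict superharmonicity $\Delta u<0$ becomes essential: the slackness (ii) only controls $\nu^*$ in a measure-theoretic sense, so we need a tool that makes the contact set between a superharmonic function and a strictly subharmonic one negligible. On $\{\phi^*=0\}\cap\{\psi^*=\phi^*-u\}$ the dual optimum satisfies $\psi^*=-u$. Since $\psi^*\in L^1(U)$ is superharmonic while $-u\in C^2(U)$ satisfies $\Delta(-u)>0$, Proposition~\ref{prop:the-same-set} applied with $u_1=\psi^*$ and $u_2=-u$ yields $|\{\psi^*=-u\}|=0$. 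Because $\nu^*\le f$ is absolutely continuous with respect to Lebesgue measure, this gives
\begin{align*}
\nu^*\bigl(\{\phi^*=0\}\bigr) \;\le\; \nu^*\bigl(\{\psi^*=-u\}\bigr) \;=\; 0,
\end{align*}
where the first inequality uses (ii). Combined with (i) this yields $\nu^*=f\chi_{\{\phi^*<0\}}$, as desired.
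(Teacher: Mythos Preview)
Your proof is correct and follows essentially the same route as the paper's: collapse the weak-duality chain via $P=D$ and Lemma~\ref{lem:stopping-lemmas} to extract the two complementary slackness relations $\phi^*(f-\nu^*)=0$ a.e.\ and $\psi^*=\phi^*-u$ $\nu^*$-a.e., then invoke Proposition~\ref{prop:the-same-set} on the level set $\{\psi^*=-u\}$ to kill $\nu^*$ on $\{\phi^*=0\}$. The only cosmetic difference is that the paper bundles the two collapsed inequalities into a single identity $(i)$ before unpacking them, whereas you state the slackness conditions separately from the start.
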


\begin{proof}
Consider an optimal primal/dual pair $(\nu^*, (\phi^*, \psi^*))$ for $P(\mu, f, u)$ and $D(\mu, f, u)$ from Theorem~\ref{th:dual-attain}, respectively. Recall from Theorem~\ref{th:dual-attain} that $\psi^*$ is chosen to satisfy Definition~\ref{def:conventional-superharmonic} so satisfies \eqref{eqn:superharmonic}  with  $\psi^* \ge \phi^* -u$. 
 Let  $\tau^*$ be the optimal stopping time with $W_0 \sim \mu$ and $W_{\tau^*} \sim \nu^*$.  Then, proceeding exactly as in the proof of  Theorem~\ref{th:weak-dual} using  Lemma~\ref{lem:maximum-principle-stopping} and Lemma~\ref{lem:stopping-lemmas}, we see that 
 \begin{equation}\label{observation_1-3}
 \int \psi^* d\mu  \ge \int \psi^* d\nu^* \ge  
  \int  (\phi^* -u)  d\nu^*.
\end{equation}
Then from Theorem~\ref{th:dual} we see that (as in the proof of  Theorem~\ref{th:weak-dual}) 
\begin{align*}
 \int u d\nu^* = P(\mu, f, u) &= D(\mu, f,  u)  = \int \phi^* df - \int \psi^* d\mu\\
 & \le \int \phi^*  df- \int (\phi^* -u)d\nu^*\\  & = \int \phi^*(df-d\nu^*)  +\int ud\nu^* \\
 &\le \int u d\nu^* \hbox{ (since $\phi^*\le 0$ and $\nu^*\le f$)}.
\end{align*}

\medskip

It follows that we have
\begin{itemize}
 \item[$(i)$] $ \int\psi^* d\mu  =  \int (\phi^* -u) d\nu^*  $ and \\
 \item[$(ii)$]  $ \int \phi^* (df -d\nu^*) =0$.
\end{itemize}
From $(ii)$ and the fact that  $\phi^* \le 0$ and $ \nu^*\leq f$, the following is immediate:
$$ \nu^* =f  \hbox{ a.e. wherever }\phi^* <0. $$

\medskip

Next we claim that  $\nu^*=0$ wherever $\phi^* =0$, namely  
\begin{equation}\label{claim2}
 \nu^*[\{\phi^* =0\}] = 0,
\end{equation} 
which will yield the desired saturation property, $\nu^* = f \chi_{\{ \phi^* <0\} }$.

\medskip

The rest of the proof presents the proof of \eqref{claim2}. 
 First  from $(i)$,  \eqref{observation_1-3} and the fact that $\psi^* \ge \phi^*-u$, it follows
\begin{align}\label{eqn:sp-strict}
 \nu^* [ \{ x \ | \  \psi^* (x) > (\phi^* -u)(x)\}]=0.
\end{align}

\medskip

 To show \eqref{claim2}
using  \eqref{eqn:sp-strict}, it is sufficient  to prove:
\begin{align*}
| \{ \phi^* =0\} \setminus \{ \psi^*  > (\phi^* -u)\}|=0. 
\end{align*}
Since $\psi^* \geq \phi^* - u$, the above set is the same as $\{\phi=0\} \cap \{\psi^* = \phi^*-u\}$. Notice that 
\begin{align*}
\{ \phi^* =0 \ \& \ \psi^*  = (\phi^* -u) \} 
 =  \{ \psi^*  = (\phi^* -u) =-u \} .
\end{align*}

Now, $\psi^*$ is superharmonic as Definition~\ref{def:conventional-superharmonic}   and $-u$ is  smooth and  strictly subharmonic, therefore by Proposition~\ref{prop:the-same-set},  the set  $\{ \psi^*  =-u \} $ has zero Lebesgue measure. This completes the proof of \eqref{claim2} and we conclude.
\end{proof}

The above saturation result gives a characterization of the optimal target $\nu^*$ as well as its uniqueness. 
\begin{corollary}[Uniqueness with fixed $u$]\label{cor:uniqueness-primal}
For $\mu$ as given in Theorem~\ref{th:dual-attain}, the primal problem $P(\mu, f, u)$
has a unique optimal solution.
\end{corollary}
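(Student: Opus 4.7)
The plan is to combine the dual attainment result (Theorem~\ref{th:dual-attain}) with the saturation theorem (Theorem~\ref{th:saturation}) in essentially a single step. By Theorem~\ref{th:dual-attain}, the hypothesis \eqref{eqn:SHmu} guarantees the existence of at least one optimal dual pair $(\phi^*, \psi^*) \in \mathcal{D}$. Fix any such optimal pair. Existence of some primal optimizer is already furnished by Theorem~\ref{th:primal-existene}, since the admissibility set is non-empty under \eqref{eqn:SHmu}.

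Now observe that Theorem~\ref{th:saturation} is phrased in the strong form that applies to \emph{any} optimal primal measure $\nu^*$ paired with \emph{any} optimal dual solution $(\phi^*, \psi^*)$: it asserts the pointwise identity $\nu^* = f\chi_{\{\phi^* < 0\}}$. The crucial feature of this identity is that its right-hand side depends only on the fixed data $f$ and on the fixed dual optimizer $\phi^*$, and not on the particular primal optimizer $\nu^*$ being considered. Therefore, if $\nu_1^*$ and $\nu_2^*$ are two optimal primal solutions, then applying Theorem~\ref{th:saturation} to each of them with the same fixed dual $(\phi^*, \psi^*)$ yields
\[
 \nu_1^* \;=\; f \chi_{\{\phi^* < 0\}} \;=\; \nu_2^*,
\]
which proves uniqueness.

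There is no genuine obstacle beyond the tools already assembled: the heavy lifting sits in Theorem~\ref{th:dual-attain} (so that a dual optimizer $\phi^*$ is actually available to anchor the formula) and in Theorem~\ref{th:saturation} (so that the support of any primal optimizer is rigidly pinned down by $\{\phi^* < 0\}$). The only subtle point worth flagging explicitly in the write-up is that the dual optimizer is chosen \emph{once and for all}, independent of $\nu^*$, which is exactly what allows the two applications of the saturation identity to produce the same measure.
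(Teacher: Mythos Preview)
Your proposal is correct and matches the paper's own proof essentially verbatim: the paper also fixes an optimal dual pair $(\phi^*,\psi^*)$ (whose existence comes from Theorem~\ref{th:dual-attain}) and observes that Theorem~\ref{th:saturation} forces every primal optimizer to equal $f\chi_{\{\phi^*<0\}}$, yielding uniqueness.
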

\begin{proof}
 This follows from Theorem~\ref{th:saturation} because $\nu^* = f \chi_{\{\phi^* <0\}}$ is completely determined by any optimal dual solution $(\phi^*, \psi^*)$. This also shows that the set $\{ \phi^* <0\}$, equivalently $\{ \phi^* =0\}$, is determined $f$-a.e. by the primal solution $\nu^*$. 
\end{proof}

It is not clear whether the optimal $\nu^*$ can even be determined independent of the choice of $u$.
Also, at this moment it is not clear whether the dual problem $D(\mu, f, u)$ has a unique optimal solution, though all its optimal solutions have the same zero set (of $\phi^*$, up to $f$-measure zero set).

\section{Existence for the supercooled Stefan  problem}\label{sec:main}
In this section we focus on the case $f\equiv 1$ to prove our main theorem, Theorem~\ref{thm:A},  for the supercooled Stefan problem.  The results in the  previous sections together with \cite{Kim-Kim21} will give us the existence of a weak solution of $(St)$, with the initial data $\eta_0$ and initial domain $\{\eta_0>0\}$. 

\medskip

Let us recall the definition of a weak solution to the supercooled Stefan problems with initial density $\eta_0\in L^1(\R^d)$ and initial domain $E\subset \R^d$:
\begin{equation*} 
(\eta -  \chi_{\{\eta>0\}})_t - \frac{1}{2}\Delta\eta = 0,  \qquad \eta(\cdot,0) =\eta_0 \in L^1(\R^d), \quad E:=\limsup_{t\to0^+}\{\eta(\cdot,t)>0\}. \leqno(St)
\end{equation*}

\begin{definition}[See e.g. \cite{Kim-Kim21}]\label{def:weak-solution}
A nonnegative function $\eta\in L^1(\R^d\times [0,\infty))$ is a weak solution of $(St)$ with initial data $(\eta_0, E)$  with $\eta_0\in L^1(\R^d)$  and  $\{\eta_0>0\}\subset E$ a.e.  if
\begin{itemize}
\item[(a)] the set $\{\eta(\cdot,t)>0\}$ decreases  in $t$;  
\item[(b)] $E= \limsup_{t\to0^+} \{\eta(\cdot,t)>0\}$  ; \\
\item[(c)] for any test function $\varphi\in C^{\infty}_c(\R^d\times [0,\infty))$,
 \begin{equation}\label{weak:stefan}
\int_0^{\infty}\int_{\R^d} [(\eta -   \chi_{\{\eta>0\}})\varphi_t +\frac{1}{2}\eta\Delta\varphi] dx dt =  \int_{\R^d} [(-\eta_0 + \chi_E)\varphi ](x,0) dx. 
\end{equation} 
\end{itemize}

\end{definition}

 We also point out that the condition $(C)$ given in Section~\ref{sec:intro} is necessary in the existence of $(St)$ (as in Definition~\ref{def:weak-solution}) with initial domain  $\{\eta_0>0\}$. 

\begin{theorem}\label{thm:necessary}
Suppose $\eta$ is a weak solution of $(St)$ with initial data $(\eta_0, E=\{\eta_0>0\})$. Then $\eta_0$ satisfies the condition (C), namely, it is strictly subharmonically ordered to $\nu:= \chi_A$  with respect to $E$, where $A$ is a subset of $E$.
\end{theorem}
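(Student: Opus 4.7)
The plan is to extract a randomized stopping time $\tau$ from the particle dynamics encoded in the weak solution $\eta$, and then verify both that $W_\tau$ has distribution $\chi_A$ for some $A\subset E$ and that the pair $(\eta_0,\chi_A)$ is strictly subharmonically ordered with respect to $E$.

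Set $D_t:=\{\eta(\cdot,t)>0\}$, $D_\infty:=\bigcap_{t>0}D_t$, and $A:=E\setminus D_\infty\subset E$. From the correspondence developed in \cite{Kim-Kim21}, the weak solution $\eta$ admits a randomized stopping time $\tau$ with $W_0\sim\eta_0$ and $\tau\le\tau^E$, such that the density of particles not yet stopped at time $t$ equals $\eta(\cdot,t)$; moreover this $\tau$ is $L^2$-optimal (in the sense of Definition~\ref{strict}) among stopping times realizing the same initial and final distributions.

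To identify the distribution $\nu$ of $W_\tau$ as $\chi_A$, I would apply It\^o's formula to $\varphi\in C_c^\infty(\R^d)$ on the interval $[0,\tau]$ to obtain
\begin{align*}
\int \varphi \, d\nu - \int \varphi\,\eta_0 \, dx \;=\; \frac{1}{2}\int_0^\infty \int \eta(x,t)\,\Delta\varphi(x)\,dx\,dt.
\end{align*}
Separately, inserting the test function $\varphi(x)\chi_T(t)$ (for a smooth temporal cutoff $\chi_T$ approximating $\chi_{[0,\infty)}$) into the weak formulation~\eqref{weak:stefan} and sending $T\to\infty$---using mass conservation of $\eta-\chi_{\{\eta>0\}}$ to ensure $\eta(\cdot,t)\to 0$ pointwise and the monotone decrease $D_t\downarrow D_\infty$ to give $\chi_{\{\eta>0\}}(\cdot,t)\to \chi_{D_\infty}$---produces the same right-hand side, equal to $\int(\chi_A-\eta_0)\varphi\,dx$. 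Comparing the two identities yields $\nu=\chi_A$, as desired.

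For strict positivity: since $E=\limsup_{t\to 0^+}D_t$, almost every $x\in E$ remains inside $D_t$ for all sufficiently small $t>0$, so Brownian particles cannot freeze at time zero, i.e., $\tau>0$ almost surely. Combined with the $L^2$-optimality of $\tau$, this yields the strict subharmonic order of Definition~\ref{strict}, establishing condition $(C)$. The main obstacle I anticipate is carefully extracting from \cite{Kim-Kim21} both the density-matching property and the $L^2$-optimality of the constructed $\tau$ with the prescribed initial data $\eta_0$ and target $\chi_A$; once these are in hand, the remaining verifications proceed by the computations above.
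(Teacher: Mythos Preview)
Your approach is essentially the same as the paper's: both rely on the correspondence in \cite{Kim-Kim21} (specifically Theorem~5.6 there) to extract an optimal stopping time $\tau$ from the weak solution, and then argue strict positivity separately. The paper simply cites \cite{Kim-Kim21} for the subharmonic ordering $\eta_0 \leq_{SH} \chi_A$, whereas you supply an explicit verification via It\^o's formula and the weak formulation; that computation is a reasonable elaboration of what \cite{Kim-Kim21} provides.

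There is, however, a genuine gap in your strict positivity argument. You deduce from $E=\limsup_{t\to 0^+} D_t$ (and the monotonicity of $D_t$) that a.e.\ $x\in E$ lies in $D_t$ for all small $t$, and conclude that ``Brownian particles cannot freeze at time zero.'' But $x\in D_t$ only says $\eta(x,t)>0$; it says nothing about whether $\eta(x,t)$ equals $\eta_0(x)$. In a randomized stopping time a positive fraction of particles starting at $x$ could stop instantly while the remainder continue, and one would still have $\eta(x,t)>0$ for small $t$. Thus the initial-domain condition controls only the \emph{support} of $\eta(\cdot,0^+)$, not its values, and does not by itself rule out $P[\tau=0]>0$. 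The paper closes this gap by observing (directly from the weak formulation \eqref{weak:stefan}) that $\lim_{t\to 0^+}\eta(\cdot,t)=\eta_0$ a.e., so no mass is lost at $t=0$ and hence $\tau>0$ almost surely. You should replace your support-based argument with this density-matching one.
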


\begin{proof}
From  \cite[Theorem 5.6]{Kim-Kim21} it follows that $\eta_0$ is subharmonically ordered to $\nu\chi_{\{s(x)<\infty\}}$ for a function $s(x)$. Moreover the weak solution $\eta(\cdot,t)$ represents the active distribution of heat particles at time $t$.  One can thus deduce that there is no instantly stopped particles, namely $\tau>0$ a.e., since one can easily check from the weak solution definition that $\lim_{t\to 0} \eta(\cdot,t)= \eta_0$ a.e.
\end{proof} 

\begin{remark}\label{rem:zone}
When $E =\{ \eta_0 >0\}$, the set $A$ in the above theorem is nothing but the transition zone $\Sigma(\eta)$ defined in the introduction.
 \end{remark}

We now prove Theorem~\ref{thm:A}, which is restated here in the context of previous sections.
\begin{theorem}[Global existence]\label{th:Stefan-weak}
For $\mu$ and $U$ as given in Section~\ref{sec:prelim}, let us denote $\eta_0$ as the density of $\mu$. Suppose that $\{\eta_0>0\}=U$.
Further suppose that $\eta_0$ satisfies $(C_0)$ in Definition~\ref{def:C0} with $|\{\eta_0=1\}|=0$.

Then there exists a  weak solution $\eta$ to $(St)$ with the initial data $\eta_0$ and the initial domain $E=U$ a.e.,  corresponding to the Eulerian variable  associated to the optimal solution $\nu^*$ to  $P(\mu, f=1, u)$. Moreover, $\eta$ is maximal in the sense of Definition~\ref{def:maximal}.
\end{theorem}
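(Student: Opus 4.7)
The plan is to combine the optimization machinery developed in Sections~\ref{sec:primal}--\ref{sec:saturation} with the Lagrangian-to-Eulerian correspondence of \cite{Kim-Kim21}. First, fix a smooth, strictly superharmonic cost $u$ on $\bar U$; concretely $u(x) = C - |x|^2$ for $C$ large enough works. Since $(C_0)$ implies \eqref{eqn:SHmu} with $f \equiv 1$, the admissible class of $P(\mu, 1, u)$ is nonempty and Theorem~\ref{th:primal-existene} produces an optimal target $\nu^*$. Theorem~\ref{th:dual-attain} yields an optimal dual pair $(\phi^*, \psi^*) \in \mathcal{D}$, and Theorem~\ref{th:saturation} gives $\nu^* = \chi_{A^*}$ where $A^* := \{\phi^* < 0\} \subset U$. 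So the optimal target is already a characteristic function, as required to invoke the particle-system interpretation of \cite{Kim-Kim21}.

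Next I would pick the quadratic-cost optimal stopping time $\tau^*$ realizing $\mu \leq_{SH} \nu^*$ with respect to $U$, as provided by \cite{beiglboeck2017optimal, PDE19, GKP21}. The hypothesis $|\{\eta_0 = 1\}| = 0$ is precisely $|\{\mu = f\}| = 0$, so Theorem~8.3(a) of \cite{Kim-Kim21} forces $\tau^* > 0$ almost surely; no particle is frozen instantaneously. Feeding $(\mu, \tau^*, \nu^*)$ into the Lagrangian-to-Eulerian transfer theorem of \cite{Kim-Kim21} then produces a function $\eta \in L^1(\R^d \times [0,\infty))$ whose positive set decreases in $t$ and which satisfies the weak formulation \eqref{weak:stefan} with initial density $\eta_0$. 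Strict positivity of $\tau^*$ means $\eta(\cdot, t) \to \eta_0$ as $t \to 0^+$, so $\limsup_{t \to 0^+} \{\eta(\cdot, t) > 0\} = \{\eta_0 > 0\} = U$ up to a null set; by Remark~\ref{rem:zone} the transition zone satisfies $\chi_{\Sigma(\eta)} = \nu^*$.

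For maximality, let $\tilde \eta$ be any weak solution with the same initial data and initial domain such that $\chi_{\Sigma(\eta)} \leq_{SH} \chi_{\Sigma(\tilde \eta)}$. Theorem~\ref{thm:necessary} gives $\mu \leq_{SH} \chi_{\Sigma(\tilde \eta)}$ with respect to $U$, and $\chi_{\Sigma(\tilde \eta)} \leq \chi_U$ since $\Sigma(\tilde \eta) \subset U$; thus $\chi_{\Sigma(\tilde \eta)}$ is admissible for $P(\mu, 1, u)$. The assumed subharmonic order combined with the superharmonicity of $u$ reverses sign to give
\[
\int u \, d\chi_{\Sigma(\tilde \eta)} \;\le\; \int u \, d\chi_{\Sigma(\eta)} \;=\; \int u \, d\nu^* \;=\; P(\mu, 1, u),
\]
while the reverse inequality holds by the definition of the infimum. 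Hence $\chi_{\Sigma(\tilde \eta)}$ is also optimal, and Corollary~\ref{cor:uniqueness-primal} forces $\chi_{\Sigma(\tilde \eta)} = \nu^* = \chi_{\Sigma(\eta)}$. With identical transition zones, initial data, and initial domains, the Lagrangian uniqueness of \cite{Kim-Kim21} closes the argument with $\tilde \eta = \eta$.

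The main obstacle is the middle step: although Sections~\ref{sec:primal}--\ref{sec:saturation} produce a saturated optimal $\nu^*$ and a strictly positive optimal stopping time $\tau^*$, turning the triple $(\mu, \tau^*, \nu^*)$ into a genuine Eulerian weak solution of \eqref{weak:stefan}---and in particular verifying that the latent-heat indicator $\chi_{\{\eta>0\}}$ exactly accounts for the frozen mass at every time---requires invoking the full Lagrangian-to-Eulerian transfer theorem of \cite{Kim-Kim21}. Once that bridge is in place, existence reduces to bookkeeping and the maximality assertion reduces to the short variational comparison above, exploiting Corollary~\ref{cor:uniqueness-primal} and Theorem~\ref{thm:necessary}.
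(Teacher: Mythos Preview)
Your proposal is correct and follows essentially the same route as the paper: obtain a saturated optimal target $\nu^*=\chi_{\{\phi^*<0\}}$ via the primal/dual machinery, use $|\{\eta_0=1\}|=0$ to get $\tau^*>0$, invoke the Lagrangian-to-Eulerian results of \cite{Kim-Kim21} to produce the weak solution, and deduce maximality from Corollary~\ref{cor:uniqueness-primal}. The only cosmetic differences are that the paper concatenates $\tau^*$ with the stopping time witnessing $\chi_{\Sigma(\eta)}\le_{SH}\chi_{\Sigma(\tilde\eta)}$ (rather than citing Theorem~\ref{thm:necessary}) to show admissibility of $\chi_{\Sigma(\tilde\eta)}$, and it spells out more explicitly---via \cite[Remark~5.2 and Theorem~5.6]{Kim-Kim21}---why the weighted equation \eqref{Stefan-nu} reduces to $(St)$ and why equality of transition zones forces $\tilde\eta=\eta$.
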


\begin{proof}
   In the following proof we use the results of \cite{PDE19, Kim-Kim21}.  Let $u$ be a superharmonic function and let $\nu^*$ be the optimal solution (target distribution) to $P(\mu, f=1, u)$ as given in  Theorem~\ref{th:primal-existene}. 
 Then we have $\nu^* =  \chi_{\{ \phi^* <0\} }$ from Theorem~\ref{th:saturation} for a dual optimal solution $(\phi^*, \psi^*)\in\mathcal{D}$ given as in  Theorem~\ref{th:dual-attain}:  note that $\mu$ satisfies \eqref{eqn:SHmu} as it satisfies the stronger condition $(C_0)$.
 Let  $\tau^*$ be the optimal stopping time  for $\nu^*$ obtained via the optimal Skorokhod problem for a type (I) cost as in \cite{PDE19}. 
  Note that the value of $P(\mu, f=1, u)$ depends only on the optimal target distribution $\nu^*$: thus it follows that  the pair $(\tau^*, \nu^*)$ is an optimal solution of $P(\mu, f=1, u)$. 
  
 Since $|\{ \eta_0=1\} | =0$ by our assumption, and since the density of $\nu^*$ is a characteristic function,  $\tau^*>0$ almost surely.
 This can be seen from \cite[Section 8]{Kim-Kim21}. 
Moreover from \cite[Section~\ref{sec:dual-attainment}]{Kim-Kim21} we have the corresponding Eulerian flow $(\eta, \rho)$ as given in \cite{PDE19} (or see \cite[Section~\ref{sec:prelim}]{Kim-Kim21}) yielding a solution to the weighted supercooled problem, namely, 
 \begin{equation}\label{Stefan-nu}
(\eta -  \nu^*\chi_{\{\eta>0\}})_t - \frac{1}{2}\Delta\eta = 0,  \qquad \eta(\cdot,0) =\eta_0 \in L^1(\R^d), \quad E:=\limsup_{t\to0^+}\{\eta(\cdot,t)>0\}.
\end{equation}
We mention that $E=U$ a.e.  follows from the fact  $\tau^*>0$ and the assumption that   $\eta_0>0$ on $U$.
It is important to note that the equation \eqref{Stefan-nu} depends only on the value of $\nu^*$ on the set $\{ x :   (x, t) \in \partial\{ \eta >0\}\}$,  in which the free boundary goes through for some time, on the support of the final distribution $\nu^*$ of $\tau^*$: see \cite[Remark 5.2]{Kim-Kim21}. 
Because $\nu^* =  \chi_{\{ \phi^* <0\} }$, this implies that the equation \eqref{Stefan-nu}  is reduced to 
 $(St)$, proving that the Eulerian variable for $\tau^*$ gives the solution to $(St)$.

  Now let us show that $\eta$ is maximal, in the sense of Definition~\ref{def:maximal}.
 Suppose that there is a weak solution $\tilde \eta$ with the same initial data $(\tilde \eta_0, E)=(\mu, U)$ such that 
   $\nu^*=\chi_{\Sigma(\eta)}\le_{SH} \chi_{\Sigma(\tilde \eta)}.$  Define $\tilde \nu: = \chi_{\Sigma(\tilde \eta)}$.  By definition of subharmonic ordering  there is a stopping time, say, $\sigma$, with $W_0 \sim \nu^*$ and $W_\sigma \sim \tilde \nu$, and $\mathbb{E}[\sigma] <\infty$. 
 So, $\tilde \nu$ is admissible for $P(\mu, f=1, u)$, and from the subharmonic order and \eqref{eqn:maximum-good} we have 
  $\int u d\tilde \nu \le \int u d\nu^*$.  Now the uniqueness of optimal solutions for $P(\mu, f=1, u)$ (Corollary~\ref{cor:uniqueness-primal}) implies 
  $$
  \nu^*=\tilde \nu.
  $$ 
   Our goal is to show that $\tilde \eta =\eta$. Here we will use the characterization of  the weak solution $\tilde{\eta}$  as an  Eulerian variable associated with the optimal Skorokhod problem, proven in \cite{Kim-Kim21}. To this end let us first introduce the corresponding $\rho$-variable for $\tilde{\eta}$.
We  define 
\begin{align*}
 \tilde s(x):= \sup \{t: \tilde \eta(x,t)>0\} \hbox{ and } 
\end{align*}
 $$
 \int_0^\infty\int_{\R^d} \tilde \rho(x) \varphi(x,t) \, dx dt =  \int \tilde{\nu}(x)\varphi(x,\tilde s(x)) dx \hbox{  for any  } \varphi\in C^{\infty}_c(\R^d\times [0,\infty)).
 $$
 Due to \cite[Theorem 5.6]{Kim-Kim21}, then 
  $(\tilde \eta,\tilde \rho)$ is the Eulerian variables between $\tilde \eta_0=\mu$ and $\tilde{\nu}$, generated by the optimal stopping time, say $\tilde \tau$,  of the optimal Skorokhod problem  in the sense of \cite{PDE19}  for costs of type (I). 
 Since $\tilde \nu = \nu^*$ we have $\tilde \tau =\tau^*$  almost surely, from the uniqueness of such an optimal stopping time \cite{PDE19}. 
This implies that  their corresponding Eulerian variables coincide, in particular, $\eta =\tilde \eta$, and we conclude.
\end{proof}

\begin{remark}\label{rmk:condition}
Note that in the proof of Theorem~\ref{th:Stefan-weak}, The assumption $|\{\eta_0=1\} |=0$ is used to ensure that the associated optimal stopping time is strictly positive. This is necessary to ensure that the active particle density $\eta$ that is obtained from $P(\mu, f=1, u)$, which is our solution of $(St)$, assumes its initial data as $\eta_0$.
\end{remark}

 \begin{remark}\label{continuity}
Note that our definition requires $\eta$ to solve the heat equation in the interior of  its positive set in $\R^d \times [0,\infty)$. Thus, formally if the set $\{\eta>0\}$ discontinuously shrinks at time $t=t_0$,  then $\eta(\cdot,t_0^-)\leq \eta(\cdot,0) <1$. On the other hand, $\{\eta(\cdot,t_0^-)>0\} \setminus\{\eta(\cdot,t_0^+)>0\}$ is a subset of $\{s(x) = t_0\}$, where  $s(x) := \inf \{ t :  (x,t) \in \{\eta>0\}^c \}.$ Thus there $\eta(\cdot,t_0^-)$ equals $\nu=1$.  
Hence we conclude that the set $\{\eta(\cdot,t)>0\}$ cannot decrease discontinuously. Of course this heuristic argument does not carefully consider the potential irregularity of $\partial\{\eta(\cdot,t)>0\}$.  The precise statement of the ``no-jump" property remains to be established.
\end{remark}

\subsection{Irregular evolution is generic for $(St)$}\label{observation00}
The  free boundary for a solution of  $(St)$ can be highly irregular in its dynamic evolution, if the initial free boundary $\partial\{\eta_0>0\}$ is less than $C^{\infty}$. More precisely, any presence of a non-smooth point on the initial free boundary may cause either global-time waiting time phenomena or instant nucleation of additional boundary points in its evolution. We explain it below:

\medskip

In \cite[Theorem 4.15]{Kim-Kim21}  it is shown that the optimal stopping time $\tau$ between  given initial and final measures $\mu$ and $\nu$, is determined by its  potential flow $U_t$ (with $\Delta U_t  = \mu_t$ and $\Delta U_\infty =\nu$, for $W_{t\wedge \tau} \sim \mu_t$). This implies that  if $\eta$ is the Eulerian variable for the initial distribution $\mu$ and target measure $\nu$, then the active region $\{x:\eta(x,t)>0 \hbox{ for some } t>0\}$ is given by $\{w>0\}$, where $w$ solves 
$$
\Delta w = \nu - \mu.
$$
 If the associated optimal stopping time $\tau$ is positive, then both $\nu$ and $\mu$ are supported in $\{w>0\}$, and thus one can write above equation as 
$$
 \Delta w = (\nu-\mu)\chi_{\{w>0\}}.
$$
In our problem, $U = \{\eta_0>0\}$ equals the active region of $\eta$,  which is $\{w>0\}$. Moreover we show  that the optimal target measure $\nu=\nu^*$ is a characteristic function $\chi_{\Sigma}$, where $\Sigma$ is the trace of the free boundary in space, namely $\Sigma= \{x: \eta(x,t)=0\hbox{ for some } 0<t<\infty\}\cap U$. Thus if we suppose
\begin{equation}\label{assumption0}
 \partial \Sigma \cap B_r(x_0) = \partial U \hbox{ for some }r>0 \hbox{ and } x_0\in\partial U,
 \end{equation}
 then we conclude that $w$ solves 
$$
 \Delta w = (1-\eta_0)\chi_{\{w>0\}} \hbox{ in } B_r(x_0),
$$
where now $\eta_0$ denotes its density. 

Suppose that $\eta_0$ is in $C^k(\bar{U})$ for some $k\in \mathbb{N}$, and strictly less than $1$. Then this problem is now the classical obstacle problem with smooth data. Hence the local regularity theory available for this problem conditions the geometry of  $\partial\{w>0\}=\partial U$ near $x_0$. Since $U$ is a domain with locally Lipschitz boundary, it follows from classical literature on the obstacle problem (for instance \cite{KN77}) that  $\partial\{w>0\}$ is $C^{k+1}$ in $B_r(x_0)$. So we conclude that, for instance, \eqref{assumption0} cannot be true for any boundary points of $U$ that does not satisfy this regularity. Then for the simple case where $\eta_0$ is a constant in $U$,  $U$ must have $C^{\infty}$ boundary to satisfy \eqref{assumption0}, otherwise we expect instant cracks appear on the free boundary, when the initial flux $|D\eta_0|$ at $x_0$ is nonzero.

 To give more explicit examples, let us assume that $U$ has a corner at $x_0$ and  $\eta_0$ is harmonic in $U$ with zero boundary conditions. When the corner at $x_0$ points outward (sharp corner), then the initial flux of $\eta$ is zero at $x_0$ and we expect $\partial\{\eta(\cdot,t)>0\}$ to still contain $x_0$ for all times. When the corner at $x_0$ points toward inward (obtuse corner), then the initial flux of $\eta$ is infinite at $x_0$ and we expect an instant slit to form on the boundary of $\{\eta(\cdot,t)>0\}$, propagating from $x_0$. In two space dimensions, this slit blocks Brownian particles, and they stop before hitting it (almost surely). Therefore, the initial domain may not be the actual region for the Brownian particle dynamics as there is a measure zero set of difference.  Similar phenomena may occur near non-smooth boundary point, in space dimension $d\ge 3$, instantly generating  a $d-1$ dimensional slit inside the original domain, that blocks Brownian motion to pass. These possible phenomena are accounted in our Theorem~\ref{thm:A}, with the expression ``possibly minus a measure zero set."

\section{Examples of initial data for existence and non-existence results.}\label{sec:obstacle-to-Stefan}

This section discusses examples of initial data $\eta_0$ to which our theory applies. More precisely we provide concrete examples of $\eta_0$ which either fails $(C)$ or satisfies $(C_0)$ but is larger than $1$ in some parts. Due to our results in the previous section, these examples yield in turn the existence and non-existence results for $(St)$. The examples we provide are given in terms of elliptic obstacle problems associated to $\eta_0$ (see also Remark~\ref{observation00}). 

\medskip

First we present a concrete condition on initial data $\eta_0$, in terms of an associated elliptic obstacle problem, so that it fails condition $(C)$ in the introduction. Theorem~\ref{thm:necessary} then yields a nonexistence result.

\begin{corollary}[Nonexistence]\label{cor:nonexistence}
Given $E\subset \R^d$ and $\eta_0\in C(\R^d)$ supported in $E$, 
consider the solution $w$ of the obstacle problem:
\begin{align*}
 \Delta w = (1- \eta_0)\chi_{\{w >0\}},  
 \quad w \ge 0. 
\end{align*}
Suppose that 
\begin{equation}\label{condition:0}
\emptyset \neq \{w >0\} \not \subseteq E.
\end{equation}
 Then $\eta_0$ fails $(C)$, and there is no weak solution to $(St)$  with the initial data $(\eta_0, E)$. 

\end{corollary}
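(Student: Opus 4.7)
The plan is to prove both conclusions --- failure of $(C)$ and non-existence of a weak solution to $(St)$ --- by a unified contradiction argument. In each scenario I would extract a Brownian randomized stopping time, confined to $E$, that transports $\eta_0$ to a target measure $\nu \leq \chi_E$; build from it an occupation potential $u \geq 0$ with $\Delta u = \nu - \eta_0$; observe that $u$ is a nonnegative distributional supersolution of the obstacle problem; invoke the minimality of $w$ among such supersolutions to deduce $w \leq u$; and conclude $\{w>0\}\subseteq E$, contradicting the hypothesis.

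The probabilistic input is provided by $(C)$ in one case and by the Lagrangian representation recalled in Theorem~\ref{thm:necessary} (based on \cite{Kim-Kim21}) in the other. In the first case there is a randomized stopping time $\tau \leq \tau^{\{\eta_0>0\}} \leq \tau^E$ with $W_0 \sim \eta_0$ and $W_\tau \sim \nu \leq \chi_{\{\eta_0>0\}}$. In the second case a weak solution $\eta$ yields $\tau \leq \tau^E$ with $W_0\sim \eta_0$ and $W_\tau \sim \nu \leq \chi_E$ (in fact $\nu=\chi_A$ for some $A\subseteq E$). In either case the paths stay in $\bar E$ up to $\tau$ and $\nu \leq \chi_E \leq 1$.

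I would then define $u(x) \geq 0$ to be half of the expected occupation-time density of the Brownian paths up to time $\tau$. Applying It\^o's formula to any $\varphi \in C_c^2(\R^d)$ together with the optional stopping theorem gives
\[
 \int \varphi\, d\nu - \int \varphi\, d\eta_0 \;=\; \E\!\left[\int_0^\tau \tfrac{1}{2}\Delta\varphi(W_s)\,ds\right] \;=\; \int u\,\Delta\varphi\,dx,
\]
so $\Delta u = \nu - \eta_0$ distributionally on $\R^d$. Confinement of the paths to $\bar E$ forces $\{u>0\}\subseteq \bar E$, and the bound $\nu \leq 1$ yields the key inequality $\Delta u \leq 1 - \eta_0$ weakly on $\R^d$. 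Hence $u$ is a nonnegative distributional supersolution of the obstacle problem in the statement.

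The comparison $w \leq u$ is then classical: on the open set $\{w>u\}$ one has $w > u \geq 0$, hence $w > 0$ and $\Delta w = 1-\eta_0 \geq \Delta u$, so $w-u$ is subharmonic there and vanishes continuously on its boundary and at infinity; the maximum principle forces $\{w>u\}=\emptyset$. Thus $\{w>0\}\subseteq \{u>0\}\subseteq \bar E$, and openness of $\{w>0\}$ together with the standing regularity of $\partial E$ (Lipschitz in the paper's setting, so $\mathrm{int}(\bar E)=E$) upgrades this to $\{w>0\}\subseteq E$, contradicting the hypothesis. I expect the main technical obstacle to be the rigorous construction of $u$ as a nonnegative $L^1_{\mathrm{loc}}$ function with the asserted Laplacian identity and support property under a randomized stopping time --- this is standard probabilistic potential theory, but requires some care; the obstacle-problem comparison and the final topological cleanup are then routine.
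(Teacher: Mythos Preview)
Your proof is correct and takes a genuinely different route from the paper. The paper does not construct an occupation potential or invoke the obstacle-problem comparison principle directly. Instead it leans on structural results from \cite{Kim-Kim21}: it identifies $\bar\nu := \chi_{\{w>0\}} + \eta_0\chi_{\{w=0\}}$ as the unique optimal target for a free-target problem (\cite[Corollary~8.5]{Kim-Kim21}), then applies monotonicity of optimal stopping times (\cite[Theorem~7.1(1)]{Kim-Kim21}) to obtain $\bar\tau \le \tau$ almost surely from the trivial inequality $\eta_0\chi_{\{w>0\}} \le \eta_0$, and finally compares the initial active regions of the corresponding Eulerian flows to conclude $\{w>0\} \subseteq E$.

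Your argument is more elementary and self-contained: the identity $\Delta u = \nu - \eta_0$ and the support constraint on $u$ come straight from It\^o's formula and path confinement, and $w \le u$ is the classical minimality of the obstacle solution among nonnegative supersolutions decaying at infinity. This bypasses the optimal-stopping machinery entirely and makes the role of the constraint $\nu \le 1$ completely transparent (it is exactly what produces $\Delta u \le 1-\eta_0$). It also treats the ``$(C)$ fails'' conclusion on equal footing with non-existence, whereas the paper's written proof argues only the non-existence part explicitly. The paper's route, by contrast, stays inside the stopping-time/Eulerian framework used throughout and sidesteps your final topological cleanup (passing from $\{w>0\}\subseteq \bar E$ to $\{w>0\}\subseteq E$), which as you correctly flag needs some mild regularity of $\partial E$ that the corollary does not actually state.
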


\begin{proof}
We show that the inclusion $\{w >0\}  \subseteq E$ is necessary for the existence of weak solutions. 
Suppose $(St)$ has a weak solution $\eta$ in this setting. Theorem~\ref{thm:necessary} implies that, condition $(C)$ holds for $\eta_0$, and thus there is a target measure $\nu= \chi_A$ with a stopping time $\tau$ with $W_0 \sim \eta_0$ and $W_\tau \sim \nu$. 
Now use \cite[Corollary 8.5]{Kim-Kim21} 
 to see that $$\bar \nu := \chi_{\{w>0\}} + \eta_0\chi_{\{w =0\}}$$ is the unique solution for the optimization problem considered in \cite[Theorem 7.4]{Kim-Kim21} with the choice $\mu= \eta_0$ and the upper bound constraint $f=1$ there. In particular $\bar \nu \le 1$.
  Let $\bar \tau$ be the optimal stopping time (in the sense of \cite[Theorems 7.4]{Kim-Kim21}) between $\eta_0$ and $\bar \nu$: from \cite[Theorems 8.3]{Kim-Kim21} we see that the active region for the associated active particle density $\bar{\eta}$, namely the set $\{\bar{\eta}>0\}$, satisfies $\lim_{\{t\to 0^+\}}\{\bar{\eta}(\cdot,t)>0\} = \{w>0\}$.
   Since $\eta_0\chi_{\{ w>0\}} \le \eta_0$, the monotonicity result  \cite[Theorem 7.1(1)]{Kim-Kim21} gives that 
$$\bar \tau \le \tau \hbox{ almost surely.}$$ Thus it follows that $\{\bar{\eta}>0\}\subset \{\eta>0\}$.
By sending $t\to 0^+$, it follows that $$\{w >0\} \subseteq E.$$ 
This completes the proof. 
\end{proof}

\begin{remark} Note that if $\eta_0 \leq1$, then $w\equiv 0$ and thus \eqref{condition:0} fails. On the other hand it is easy to see from the elliptic equation that $w$ should be differentiable and in particular has its slope zero when it hits zero value. Hence if $\eta_0$ is large on most of $E$, $w$ is very concave in most of $E$. For instance in the radial case, one can imagine that $w$ has large downward slope, growing linearly away from the center point of $E$. From this and the fact that $w$ is $C^2$, it is easy to check that $\{w>0\}$ needs to be larger than $E$.
\end{remark}

We next provide conditions under which $\eta_0$ satisfies condition $(C_0)$ with some $\nu$,  based on the results from \cite{Kim-Kim21}. Existence results follow due to Theorem~\ref{thm:A}.
\begin{corollary}[Existence]\label{cor:existence}
Given $\eta_0 \in L^\infty(\R^d)$, assume that the set $\{ \eta_0 >0\}$ is a bounded open set with locally Lipschitz boundary with the property $| \{ \eta_0 =1\}| =0$.
 Suppose  further that there exists  $0<\delta <1$ such that for the solution $w$ of the 
  obstacle problem
\begin{align*}
 \Delta w = ( \delta -   \eta_0)\, \chi_{\{w  >0\}} , \quad w \ge 0, 
\end{align*}
satisfies
\begin{align*}
 \{ w>0\} \subseteq \{ \eta_0 >0\}. 
\end{align*}
Then, there is a weak solution to $(St)$ with the initial data $ \eta_0$ and initial domain $\{\eta_0>0\}$ . \end{corollary}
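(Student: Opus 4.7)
The plan is to exhibit an explicit witness $\nu$ for the condition $(C_0)$ of Definition~\ref{def:C0}, from which Theorem~\ref{th:Stefan-weak} (equivalently, Theorem~\ref{thm:A}) immediately delivers the desired weak solution of $(St)$ with initial data $\eta_0$ and initial domain $\{\eta_0>0\}$.

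My candidate witness is
\begin{equation*}
\nu := \delta\,\chi_{\{w>0\}} + \eta_0\,\chi_{\{w=0\}},
\end{equation*}
which is the exact analog of the measure $\bar\nu = \chi_{\{w>0\}} + \eta_0\chi_{\{w=0\}}$ used in the proof of Corollary~\ref{cor:nonexistence}, now with the upper-bound cap $f\equiv 1$ replaced by $f\equiv\delta$. Heuristically, on the noncoincidence set $\{w>0\}$ the Brownian particles have been redistributed to saturate the cap $\delta$, while on the coincidence set $\{w=0\}$ they do not move.

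To verify $(C_0)$ I need to check both $\nu\le\delta\chi_U$ and $\eta_0\le_{SH}\nu$ with respect to $U$. For the density bound, the support of $\nu$ lies in $\{w>0\}\cup(\{w=0\}\cap\{\eta_0>0\})\subseteq U$ thanks to the standing hypothesis $\{w>0\}\subseteq U$, while on $\{w=0\}$ the bound $\nu=\eta_0\le\delta$ is the obstacle-problem complementarity inherent in the variational construction of $w$ from \cite[Theorem 7.4 and Corollary 8.5]{Kim-Kim21}: were $\eta_0>\delta$ on a positive-measure subset of $\{w=0\}$, some of that excess mass could be profitably moved, contradicting the optimality of $w$, so $\{\eta_0>\delta\}\subseteq\{w>0\}$. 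For the subharmonic order I appeal directly to \cite[Corollary 8.5]{Kim-Kim21} applied with the constant cap $\delta$ in place of $1$: the associated Skorokhod embedding produces a stopping time $\tau$ with $W_0\sim\eta_0$, $W_\tau\sim\nu$ and $\mathbb{E}[\tau]<\infty$, and because the entire transport is supported in $\{w>0\}\subseteq U$, we also obtain $\tau\le\tau^U$, exhibiting $\eta_0\le_{SH}\nu$ with respect to $U$.

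Once $(C_0)$ is in hand, the remaining hypotheses of Theorem~\ref{th:Stefan-weak}---that $\{\eta_0>0\}$ be bounded, open, and locally Lipschitz, and that $|\{\eta_0=1\}|=0$---are built directly into the statement of the corollary, so Theorem~\ref{th:Stefan-weak} closes the argument. I expect the only delicate point to be the transfer of \cite[Corollary 8.5]{Kim-Kim21} from the case $f\equiv 1$ (as it is used in Corollary~\ref{cor:nonexistence}) to the case of a general constant cap $f\equiv\delta\in(0,1)$; the arguments there should be insensitive to the value of the constant, but one must carefully track the constraints $\nu\le\delta$ and $\tau\le\tau^U$ through that construction.
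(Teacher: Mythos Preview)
Your proposal is correct and follows essentially the same approach as the paper: both define the witness $\nu=\delta\chi_{\{w>0\}}+\eta_0\chi_{\{w=0\}}$ via \cite[Corollary 8.5]{Kim-Kim21} applied with cap $f\equiv\delta$, verify $\nu\le\delta\chi_U$ and $\eta_0\le_{SH}\nu$ with respect to $U$, and then invoke Theorem~\ref{thm:A}. Your version is in fact slightly more explicit than the paper's, spelling out why $\eta_0\le\delta$ on $\{w=0\}$ and why $\tau\le\tau^U$, whereas the paper simply asserts $\bar\nu\le\delta$ as a consequence of the optimality characterization in \cite[Theorem 7.4]{Kim-Kim21}.
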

\begin{proof}
The proof is in the same spirit of the one for Corollary~\ref{cor:nonexistence}.
Use \cite[Corollary 8.5]{Kim-Kim21} 
 to see that $$ \eta_0 \le_{SH} \bar \nu := \delta \chi_{\{w>0\}} +  \eta_0\chi_{\{w =0\}}$$ where $\bar \nu$ is the unique optimal solution in the sense of \cite[Theorem 7.4]{Kim-Kim21} with the choice $\mu= \eta_0$ and $f=\delta$ there; so $\bar \nu \le \delta$.
 Moreover, from the condition $\{w >0\} \subseteq \{  \eta_0>0\}$ it holds that $\{\bar \nu >0\} =\{  \eta_0 >0\}$. 
 We thus can apply Theorem~\ref{thm:A} as $\eta_0$ and $\bar \nu$ satisfy the condition $(\mathcal{C}_0)$. This completes the proof.
 \end{proof}
\begin{remark}\label{rmk:many-examples}
 Notice that the condition of Corollary~\ref{cor:existence} can easily be satisfied by many $\eta_0$ that takes values larger than $1$, for instance  when $\eta_0$ is large in the center of its support, far away from its boundary. \end{remark}

\end{document}